\documentclass[a4paper,12pt,reqno]{article}

 \usepackage[english]{babel}
  \usepackage{amsmath,amsfonts,amssymb,amsthm,bbm}
   \usepackage{graphics,epsfig,psfrag} 
    \usepackage{esint}
   \usepackage{paralist,subfigure}
   \usepackage[dvipsnames]{xcolor}
   \usepackage{hyperref} 

    \usepackage[active]{srcltx} 
    \usepackage{color,soul} 
    \usepackage[latin1]{inputenc}
    \usepackage[OT1]{fontenc}
    \usepackage{authblk}

\tolerance = 1500 
\hoffset = .3cm
\voffset = -.6cm
 
\textwidth = 15.2cm
\textheight = 23.4cm
\topmargin = 0pt
\headheight = 20pt
\oddsidemargin = 0pt
\evensidemargin = 0pt
\marginparwidth = 10pt
\marginparsep = 10pt
\pagenumbering{arabic}

\newtheorem{theorem}{Theorem}[section]
\newtheorem{cor}[theorem]{Corollary}
\newtheorem{lemma}[theorem]{Lemma}
\newtheorem{prop}[theorem]{Proposition}

\theoremstyle{definition}
\newtheorem{definition}[theorem]{Definition}
\newtheorem{req}{Remark}

\newtheorem{question}{Question}




\def\N{\mathbb{N}}

\def\R{\mathbb{R}}

\let\O=\Omega
\let\e=\varepsilon

\let\t=\tilde
\let\ol=\overline
\let\ul=\underline
\let\.=\cdot
\let\0=\emptyset

\let\mc=\mathcal

\def\1{\mathbbm{1}}

\def\l{\lambda_1}

\newenvironment{formula}[1]{\begin{equation}\label{#1}}
                       {\end{equation}\noindent}

\def\Fi#1{\begin{formula}{#1}}
\def\Ff{\end{formula}\noindent}

\setlength{\marginparwidth}{1in}




\title{\bf  Qualitative properties of spatial epidemiological models.}

\author[]{Romain {\sc Ducasse}}
\affil[]{Institut Camille Jordan, Universit\'e Claude Bernard Lyon 1}				


\begin{document}

\date{}
\maketitle


\noindent {\textbf{Keywords:} systems of parabolic equations, long-time behavior, SIR models, epidemiology, threshold phenomenon, heterogeneous models.} 
 \\
\\
\noindent {\textbf{MSC:} 35B40, 35K57 ,35K40, 92D30}

\begin{abstract}
We study the qualitative properties of a spatial diffusive heterogeneous SIR model, that appears in mathematical epidemiology to describe the spread of an infectious disease in a population. The model we consider consists in a system of parabolic PDEs.

In the first part of the paper, we give a criterion that ensures whether or not an epidemic propagates in a given population. We show how the features of the disease and of the population (rates of infection and of recovery, localisation and diffusivity of individuals) influence the propagation of the epidemic. In particular, we prove that there are situations where ``slowing down" the individuals can trigger an epidemic that would not propagate otherwise.

In the second part of the paper, we show how the spatial diffusive SIR model qualitatively differs from the usual, purely temporal, SIR model.
\end{abstract}

\section{Introduction}\label{sec intro}

\subsection{SIR models}

An epidemic is an outbreak of a disease that affects a great number of individuals in a short range of time. Many mathematical models were introduced to study the spread and the outcome of epidemics and to help inform public health interventions. \\

The study of deterministic mathematical models in epidemiology dates back to the seminal works of W. O. Kermack and A.~G.~McKendrick \cite{KmcK1, KmcK2, KmcK3} in $1927$. They introduced several models designed to describe the temporal development of a disease; the most famous being probably the \emph{SIR} (Susceptible - Infectious - Recovered) model. It is a \emph{compartmental model}, that is, the population under consideration is divided into compartments: the \emph{Susceptibles} are the individuals that are untouched by the disease, they can be contaminated by the \emph{Infectious}. The infectious become \emph{Recovered} after some time. The recovered individuals can not become susceptible again, either because they have permanent immunity, or because they are dead.

The SIR model of Kermack and McKendrick consists in the following system of ODEs:

\begin{equation}\label{SIR}
\left\{
    \begin{array}{rlll}
    \dot S(t) &= - \alpha SI,& \quad    &t>0,  \\
   \dot I(t) &= \alpha SI - \mu I,& \quad    &t>0.
   \end{array}
\right.
\end{equation}
The functions $S(t),I(t)$ represent the fraction of the population that is susceptible and infectious respectively, at time $t>0$. The susceptible become infectious following a law of mass-action, with a rate $\alpha  I$, where $\alpha>0$ is a constant parameter that measures the transmission efficiency of the disease. The more infectious individuals there are in the population, the more likely it is for a susceptible to get contaminated, and then to become infectious.

The infectious individuals have a recovery rate $\mu >0$ (then $\frac{1}{\mu}$ is the life expectancy of an infectious).\footnote{The system \eqref{SIR} is sometimes completed by a third equation $\dot R(t) = \mu I(t)$, where $R(t)$ is the density of recovered individuals. They do not play any role in the dynamic of the system.
 }\\

The SIR model \eqref{SIR} received a lot of attention, due to the fact that it reflects several qualitative properties of epidemics. One of its most important properties is the \emph{threshold phenomenon}, that states that, if the quantity
$$
R_0 := \frac{\alpha S_0}{\mu}
$$
is strictly larger than $1$, then an epidemic propagates, in the sense that the introduction of even a single infectious individual in the population (sometimes called the \emph{patient zero}, or the \emph{index case}) triggers the contamination of ``many other" individuals. If $R_0 \leq 1$, the epidemic fades off. The notions of propagation and fading-off are precisely defined below.

The number $R_0$, called the \emph{basic reproduction number}, represents the ration between the number of newly infectious individuals and the number of newly recovered individuals at initial time. It relates the features of the disease (here, the mortality and the contamination rates) and of the population (the number of susceptible individuals).

The SIR model was also useful in understanding the concept of \emph{herd immunity}, and how to reach it through mass vaccination. We refer to \cite{Mu1, Mu2} and the references therein for more details.\\

An important question, when modelling a phenomenon, is to find the adequate balance between the level of precision with which we describe it and the mathematical complexity of the resulting model. From that perspective, the SIR model \eqref{SIR} is rather simple, it consists in two coupled first-order ODEs. This is a consequence of the fact that it describes the evolution of the disease at a coarse scale: it does not take into the dynamics of the individuals at the microscopic scale. The model \eqref{SIR} is \emph{macroscale}.

Because of that, it is often thought that \eqref{SIR} is accurate only to describe the evolution of a disease in a population located in one ``small" site, where contacts between individuals are extremely frequent, such as a very dense city for instance.\\

If one wants to describe the propagation of a disease into a region, a country, or the whole world, the setting is different, and it is then natural to take into account in the model spatial, microscopic, effects: movement of individuals (migration, diffusion), spatial distribution of the population...\\


Starting from the SIR model \eqref{SIR}, we can build a simple \emph{microscale} model for the spatial spread of an epidemic by adding diffusion terms. These terms will reflect the spatial dispersal of individuals at the microscopic scale. Doing so, several authors (we recall some past results in Section \ref{past results}) were lead to study the following system of PDEs:

\begin{equation}\label{systH}
\left\{
    \begin{array}{rlll}
    \partial_t S(t,x) &= d_S \Delta S(t,x) - \alpha SI,& \quad    &t>0, \ x\in \O,  \\
   \partial_t I(t,x) &= d_I \Delta I(t,x) + \alpha SI - \mu I,& \quad    &t>0, \ x\in \O, \\
   \partial_\nu S(t,x) &= \partial_\nu I(t,x) =0,& \quad &t>0, \ x\in \partial\O.
    \end{array}
\right.
\end{equation}
In this system, the domain $\O \subset \R^N$ is an open connected bounded set of class $C^2$, and $\nu$ is the unit outward normal vector field to $\O$.

The functions $S(t,x),I(t,x)$ represent the densities of susceptible and infectious individuals respectively, at time $t>0$ and at position $x\in\O$. The individuals move randomly following a Brownian motion on the domain, which is reflected in the equations by the presence of Laplace operators. The quantities $d_S,d_I >0$ are the diffusivities of the susceptible and infectious individuals respectively. They represent the amplitude of the Brownian motions of the individuals.

The Neuman boundary condition accounts for the fact that the individuals that reach the boundary bounce back into the domain following a Descartes reflection law. Finally, one has to to complete this system with an initial condition $(S_0(\cdot),I_0(\cdot))$ representing the initial spatial distribution of susceptible and infectious individuals. Observe that, if $S_0,I_0$ are constant over $\O$, then \eqref{systH} boils down to \eqref{SIR}.\\

The model \eqref{systH} takes into account spatial effects, unlike \eqref{SIR}, but it makes the assumption that the underlying phenomena are \emph{homogeneous}. It is sometimes natural to study heterogeneous versions of \eqref{systH}. Indeed, the infection and recovery rates can vary from places to places (for instance if there are isolated or quarantine zones), the diffusion of individuals can change according to the location (reflecting the geography of the territory for instance).

A way to take into account heterogeneous effects in the system is to modify~\eqref{systH} into the following:

 \begin{equation}\label{syst}
\left\{
    \begin{array}{rlll}
    \partial_t S(t,x) &= \nabla \cdot ( A_S(x) \nabla S)(t,x) - \alpha(x)SI,& \quad    &t>0, \ x\in \O,  \\
   \partial_t I(t,x) &= \nabla \cdot ( A_I(x) \nabla I)(t,x) + \alpha(x)SI - \mu(x)I,& \quad    &t>0, \ x\in \O, \\
   \nu \cdot A_S\nabla S(t,x) &= \nu\cdot A_I \nabla I(t,x) =0,& \quad &t>0, \ x\in \partial\O.
    \end{array}
\right.
\end{equation}
In this model, the movement of the individuals are given by diffusion matrices $A_S,A_I$. In the whole paper, these matrices are assumed to be elliptic and of class $C^1(\ol\O)$. The per capita rate of infection $\alpha$ and the rate of recovery $\mu$ are assumed to be continuous and strictly positive functions of $x$ on $\ol \O$.

In the sequel, we will refer to \eqref{syst} as \emph{the diffusive model}. \\







The first question we adress in the paper is the following:

\begin{question}\label{q1}
Under which conditions on the features of the epidemic (rates of contamination and of recovery), and of the population (diffusivity and initial localisation) does the apparition of a disease in a population triggers the spread of an epidemic?
\end{question}

When dealing with the model \eqref{SIR}, we recalled above that the necessary and sufficient condition for the epidemic to propagate (in a sense defined below) is to have $R_0>1$. We generalize this result to the diffusive model \eqref{syst} by proving that it also exhibits a threshold phenomenon: there is a quantity (\emph{a priori} different from $R_0$) whose value indicates whether or not the epidemic propagates or fades off.

We will study how this quantity depends on the parameters of the system $\alpha, \mu, A_I, A_S, S_0$, this will allow us to exhibit some qualitative properties of the SIR system \eqref{syst}. In particular, we will show that decreasing the diffusion of the infectious individuals can trigger the spread of an epidemic, and increasing this diffusion can block the epidemic. \\

We explained above that the model \eqref{syst} was built by adding diffusion terms in the macroscale model \eqref{SIR}. This strategy is very common in population dynamics, in chemistry, in mathematical neuroscience, and, of course, in epidemiology. This strategy generally increases the mathematical complexity of the model: it turns ODE systems into PDE systems.

However, a naive - yet also natural - way to account for the microscopic features of the epidemic starting from \eqref{SIR} is simply to \emph{average every spatial quantities}. More precisely, if we want to describe the evolution of a disease in a region $\O$, where the susceptible and infectious individuals are initially distributed according to the densities $S_0(\cdot), I_0(\cdot)$ and where the contamination and recovery rates are the functions $\alpha(\cdot),\mu(\cdot)$, then it is tempting to define the averaged quantities $\ol S_0 = \fint_\O S_0, \ol I_0 = \fint_\O I_0, \ol \alpha = \fint_\O \alpha, \ol\mu =\fint_\O \mu$,\footnote{The notation $\fint_\O$ denotes the spatial average, that is, for $u \in L^1(\O)$, $\fint_\O u := \frac{1}{\vert \O\vert}\int_\O u$, with $\vert \O\vert$ the measure of $\O$.} and to expect the total number of susceptible and infectious individuals at time $t>0$ to be solutions of
\begin{equation}\label{SIRav}
\left\{
    \begin{array}{rlll}
    \dot {\mc S}(t) &= - \ol\alpha \mc S \mc I,& \quad    &t>0,  \\
   \dot {\mc I}(t) &= \ol\alpha \mc S\mc I - \ol\mu \mc I,& \quad    &t>0,\\
   \mc S(0) &= \ol S_0, \quad \mc I(0) = \ol I_0,& &
   \end{array}
\right.
\end{equation}
The system \eqref{SIRav} is simply the macroscale system \eqref{SIR} but where the microscopic effects are averaged to be turned into macroscopic quantities. If $\alpha,\mu,S_0$ and $I_0$ are constant, then the averaged model \eqref{SIRav} is equivalent to the model \eqref{syst} in the sense that $(\mc S,\mc I) = (\fint_\O S,\fint_\O I)$, where $(\mc S,\mc I)$ and $(S,I)$ are the solutions of \eqref{SIRav} and \eqref{syst} respectively (the latter is independent of the $x$ variable). As soon as any of the quantity $\alpha, \mu, S_0, I_0$ is not constant, this is not true anymore.

In the sequel, we shall refer to the model \eqref{SIRav} as \emph{the averaged model}. The second question we study in the paper is then the following:

\begin{question}\label{q2}
Can we compare the predictions of the diffusive model \eqref{syst} with those of the averaged model \eqref{SIRav}?
\end{question}

We will investigate, for $A_S,A_I,\alpha,\mu,S_0,I_0$ given, how \eqref{SIRav} and \eqref{syst} differ in two aspects. First, we will show that the averaged model always ``underestimate" the risk that the epidemic propagates, in the sense that, if the epidemic propagates in \eqref{SIRav}, it also propagates for \eqref{syst}, but we will exhibit situations where the reciprocal is false. 

Then, we will compare the predictions of the models in what concerns the number of individuals left untouched after the epidemic, or, what is equivalent, the number of contaminated individuals. We will in particular show that, when $\alpha,\mu,S_0$ are constant the averaged model \eqref{SIRav} always underestimate the number of casualties compared to the homogeneous diffusive model \eqref{systH}. This difference, however, disappears when $I_0$ is ``small".\\


 


We conclude this section by defining what it means for an epidemic to propagate in the SIR models above.

\begin{definition}\label{def}
Let $\alpha,\mu, S_0$ be positive and continuous on $\ol \O$.
\begin{itemize}
    \item We say that the \emph{epidemic propagates} for the SIR model \eqref{syst} if and only if there is $\e >0$ such that, for every $I_0 \in C^0(\ol\O)$, $I_0\geq 0$, $I_0 \not\equiv 0$, the solution $(S(t,x),I(t,x))$ of \eqref{syst} arising from the initial datum $(S_0,I_0)$ satisfies
	$$
	\int_\O S_\infty \leq \int_\O S_0-\e,
	$$
	where $S_\infty := \lim_{t\to+\infty} S(t,x)$.

	\item We say that the \emph{epidemic fades off} for the SIR model \eqref{syst} if and only if, for every $\e>0$, there is $\delta>0$ such that, for very $I_0$ such that $I_0\geq 0$, $I_0\not\equiv 0$ and $\vert\vert I_0\vert\vert_{L^\infty(\O)} \leq \delta$, the solution $(S(t,x),I(t,x))$ of \eqref{syst} arising from the initial datum $(S_0,I_0)$ satisfies
	$$
	\int_\O S_{\infty} \geq \int_\O S_0 - \e,
	$$
	where $S_{\infty} := \lim_{t\to+\infty}S(t,x)$.
\end{itemize}
\end{definition}
Let us say a word on this definition. The quantity $\int_\O S_0$ represents the total number of susceptible individuals at initial time, and $\int_\O S_\infty$ represents the final number of susceptible individuals. The above definition says that the epidemic propagates if the number of susceptible individuals is strictly decreased after the epidemic has passed, even when there are infinitely few infectious individuals at the initial time. Considering situations with a very small amount of infectious individuals at the initial time is meaningful from the modeling point of view: when a new disease appears, there are usually very few infectious individuals, and they are usually localized, and the question is whether or not a single individual, the patient zero, can be the source of an epidemic.

The notions of propagation and of fading-off for the epidemic given in Definition~\ref{def} also holds true for the homogeneous system \eqref{systH} and for the ODE systems~\eqref{SIR} and \eqref{SIRav}; in this last case, we say that the epidemic propagates if there is $\e>0$ such that, for every $I_0>0$, the solution $(S(t),I(t))$ of \eqref{SIR} or \eqref{SIRav} satisfies $S_\infty \leq S_0 -\e$, where $S_\infty = \lim_{t\to +\infty}S(t)$. The definition of the fading-off is similar.


\subsection{Review of some results on the SIR models}\label{past results}

We present in this section some results concerning the SIR systems \eqref{SIR}, \eqref{systH} and~\eqref{syst}. A more comprehensive presentation, together with modeling and biological discussions, can be found in \cite{Mu1, Mu2}.\\

The main result concerning the ODE system \eqref{SIR} is the following:
\begin{theorem}[\cite{KmcK1}]\label{th SIR}
Let $\alpha,\mu,S_0$ be positive constants. Then
\begin{itemize}
    \item If $\frac{\alpha S_0}{\mu}>1$, the epidemic propagates in the SIR model \eqref{SIR}, in the sense of Definition \ref{def}.
    
    \item If $\frac{\alpha S_0}{\mu}\leq 1$, the epidemic fades off in the SIR model \eqref{SIR}, in the sense of Definition \ref{def}.
\end{itemize}
    In addition, if $(S(t),I(t))$ is the solution of \eqref{SIR} arising from the initial datum $(S_0,I_0)$ with $I_0\geq 0$, then,
    $$
    S(t) \underset{t \to +\infty}{\longrightarrow} S_\infty, \quad I(t) \underset{t \to +\infty}{\longrightarrow} 0,
    $$
    where $S_\infty$ is the unique real number such that $S_\infty \leq S_0$ and
    \begin{equation}\label{eq th}
    \frac{\alpha}{\mu}S_\infty - \ln(S_\infty) = \frac{\alpha}{\mu}S_0 - \ln(S_0) + \frac{\alpha}{\mu}I_0.     
    \end{equation}
\end{theorem}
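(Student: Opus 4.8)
The plan is to extract a conserved quantity from the system \eqref{SIR} and read off both the asymptotics and the threshold dichotomy from it. First I would record the elementary qualitative features of the flow. Since $\dot S = -\alpha S I$, the quadrant $\{S\ge 0,\ I\ge 0\}$ is invariant and $S(t)$ is nonincreasing; moreover $\frac{d}{dt}(S+I) = -\mu I \le 0$, so $S+I$ is nonincreasing as well, which bounds the trajectory and yields global existence. Because $S$ is monotone and bounded it converges to some $S_\infty \ge 0$, and since $S+I$ converges too, $I(t)$ tends to a limit $I_\infty \ge 0$. Integrating $\frac{d}{dt}(S+I) = -\mu I$ over $(0,+\infty)$ shows $\int_0^\infty I < \infty$, so the convergent function $I$ must satisfy $I_\infty = 0$.

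The crucial step is the first integral. Dividing the two equations (equivalently, differentiating in $t$) one checks that
$$
H(S,I) := \frac{\alpha}{\mu}(S+I) - \ln S
$$
is constant along trajectories, since $\frac{d}{dt}H = \frac{\alpha}{\mu}(\dot S + \dot I) - \dot S/S = -\alpha I + \alpha I = 0$. Evaluating $H$ at $t=0$ and letting $t\to+\infty$ with $S(t)\to S_\infty$ and $I(t)\to 0$ gives the conservation identity; this forces $S_\infty>0$ (otherwise $-\ln S(t)\to+\infty$ would contradict constancy of $H$) and produces exactly \eqref{eq th}.

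It then remains to analyse $g(s) := \frac{\alpha}{\mu}s - \ln s$ on $(0,S_0]$. As $g'(s)=\frac{\alpha}{\mu}-\frac1s$ vanishes only at $s=\mu/\alpha$ and $g(s)\to+\infty$ as $s\to0^+$, the behaviour splits according to the sign of $R_0-1=\frac{\alpha S_0}{\mu}-1$. If $R_0\le 1$ then $S_0\le \mu/\alpha$ and $g$ is strictly decreasing on $(0,S_0]$, hence a bijection onto $[g(S_0),+\infty)$; equation \eqref{eq th} then has the unique root $S_\infty=g^{-1}\bigl(g(S_0)+\frac{\alpha}{\mu}I_0\bigr)\le S_0$, and by continuity of $g^{-1}$ one has $S_\infty\to S_0$ as $I_0\to0^+$, which is precisely fading-off. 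If $R_0>1$ then $g$ decreases on $(0,\mu/\alpha)$ and increases on $(\mu/\alpha,S_0]$; since for $I_0>0$ the right-hand side $g(S_0)+\frac{\alpha}{\mu}I_0$ of \eqref{eq th} exceeds $g(S_0)$, the only admissible root lies on the decreasing branch $(0,\mu/\alpha)$, proving uniqueness, and on that branch a larger $I_0$ yields a smaller $S_\infty$. Letting $I_0\to0^+$ defines the threshold value $S_\infty^0\in(0,\mu/\alpha)$ solving $g(s)=g(S_0)$, and setting $\varepsilon:=S_0-S_\infty^0>0$ gives $S_\infty\le S_\infty^0=S_0-\varepsilon$ for every $I_0>0$, i.e. propagation in the sense of Definition~\ref{def}.

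The main obstacle is not a single estimate but the bookkeeping forced by the non-monotone $g$: both the uniqueness of $S_\infty$ and the existence of a \emph{uniform} $\varepsilon$ in the propagation regime hinge on correctly locating the root of \eqref{eq th} on the decreasing branch of $g$ and on the monotone dependence of that root on $I_0$. Care is also needed to upgrade ``$I$ bounded'' to $I_\infty=0$, which is exactly where the integrability $\int_0^\infty I<\infty$ coming from $\frac{d}{dt}(S+I)=-\mu I$ is indispensable.
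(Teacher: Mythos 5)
Your proof is correct and takes essentially the same approach as the paper: your first integral $H(S,I)=\frac{\alpha}{\mu}(S+I)-\ln S$ is exactly the paper's conserved quantity $\mathcal{E}(t)$, and the threshold dichotomy is read off from the same analysis of the convex function $s\mapsto \frac{\alpha}{\mu}s-\ln s$ and the limit $I_0\to 0^{+}$. You simply fill in the details the paper delegates to its references (integrability of $I$ forcing $I_\infty=0$, positivity of $S_\infty$ via the first integral, and the uniform $\varepsilon$ on the decreasing branch in the supercritical case), all of which are sound.
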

This result summarizes the results of Kermack and McKendrick on the model~\eqref{SIR}. First, it establishes the threshold effect mentioned above: the epidemic propagates if and only if the quantity $\frac{\alpha S_0}{\mu}$ is greater than $1$.

The second part of the result indicates how to compute $S_\infty$, the number of individuals left untouched after the epidemic.\\

Let us give a quick idea about how to prove Theorem \ref{th SIR}. The key point is that the quantity
$$
\mc E(t) := \frac{\alpha}{\mu}S(t) - \ln(S(t)) + \frac{\alpha}{\mu} I(t)
$$
is conserved along the evolution of the system. In addition, one can prove that $I(t)$ goes to zero and that $S(t)$ converges to a positive constant $S_\infty$ as $t$ goes to $+\infty$. Therefore, using $\lim_{t\to+\infty}\mc E(t) = \mc E(0)$, one gets \eqref{eq th}.

Observing that the function $x \mapsto \frac{\alpha}{\mu}x - \ln(x) $ is strictly convex, reaches its unique minimum at $x = \frac{\mu}{\alpha}$, and goes to $+\infty$ as $x$ goes to zero and to $+\infty$, we see that the equation \eqref{eq th} admits two solutions for $S_\infty$, but because $S(t)$ is non-increasing (because $\dot S(t) \leq 0$), we have $S_\infty \leq S_0$ and \eqref{eq th} uniquely defines $S_\infty$.

The rest of the theorem then comes by observing that, for $S_0$ fixed, the quantity $S_\infty$ given by \eqref{eq th} converges, as $I_0>0$ goes to zero, to $S_0$ if $S_0\leq \frac{\mu}{\alpha}$, while it converges to the unique solution of $f(S_\infty) = f(S_0)$ such that $S_\infty < S_0$ if $S_0> \frac{\mu}{\alpha}$.

We refer to \cite{formulation, KmcK1} for the details of the proof.\\

When considering spatial systems like \eqref{systH}, \eqref{syst}, the situation is more involved. Indeed, what makes the analysis possible for \eqref{SIR} is that we know a quantity that is conserved along the evolution; this is not the case anymore for spatial models.\\

The spatial homogeneous system \eqref{systH} is studied by Hosono and Ilyas in \cite{HI}, with $\O = \mathbb{R}$. They show the existence of \emph{traveling waves}, that is, solutions of the form $(S(t,x),I(t,x)) = (p(x - ct),q(x-ct))$, with $q(-\infty)=q(+\infty)=0$ and $p(+\infty) = S_0$ and $p(-\infty)= S_\infty$. Here, $c\in \R$ is the speed of the wave.

The existence of waves is particularly interesting as it allows to define a notion of \emph{speed for the epidemic}.

The main result of \cite{HI} is that, if $\frac{\alpha S_0}{\mu} >1$, there exists $S_\infty>0$ such that there are traveling waves with speed $c$, for every $c\geq c^\star := 2\sqrt{d_I(\alpha S_0 - \mu)}$. There are no traveling waves when $c<c^\star$ or when $\frac{\alpha S_0}{\mu} \leq 1$.

The authors of \cite{HI} use crucially the fact that the system is homogeneous to rewrite the equations satisfied by the profiles of the waves (the functions $p$ and $q$) and to use a phase-plane analysis. 

Let us emphasize that the quantity $S_\infty$ is not explicit in the paper \cite{HI}.

The results of \cite{HI}, the existence and the computation of the speed of traveling waves, are very similar to a celebrated result of Kolmogorov, Petrovski and Piskunov concerning the existence of traveling waves for reaction-diffusion equations, see \cite{KPP}.\\

The heterogeneous system \eqref{syst} does not allow for a phase pane analysis, and its analysis is for now much less advanced, except in the specific case where the diffusion of the susceptible individuals is zero, that is, when $A_S \equiv 0$. 
In this case, one can do some change of variable : we can show that the function $u(t,x) := -\ln\left(\frac{S(t,x)}{S_0(x)}\right)$ solves an integral equation of the form
\begin{equation}\label{eq integ}
u(t,x) = \int_0^t\int_\O H(t,x,y)g(u(t-\tau,y))dyd\tau + f(t,x), \quad t>0, \ x\in \O.
\end{equation}
The function $u$ is sometimes called the \emph{strenght of infection}. The functions $H,g,f$ encode the features of the epidemic and of the population, and can be computed from $A_I,\alpha,\mu,S_0,I_0$.

This approach was used originally in the homogeneous framework by O. Diekmann and H. Thieme independently, see \cite{Di1, T1}. They prove that there is a threshold phenomenon for the equation \eqref{eq integ}, and also that there exist traveling wave solutions to~\eqref{eq integ}. 

The threshold phenomenon for integral equations of the form \eqref{eq integ} was extended to more general heterogeneous frameworks by H. Inaba \cite{I} when $\O$ is bounded.

Using a related change of variable, A. Ducrot and T. Giletti prove the existence of traveling waves for \eqref{syst} with periodic heterogeneities in \cite{DG}. The author of the present paper considered some models with non-local interactions and periodic heterogeneities in \cite{D}.\\

Finally, we want to conclude this section with a word about other similar epidemiological models. As we already mentioned, the SIR model is a compartmental model, with three compartments: susceptibles, infectious, recovered.

There exist many other models. For instance, the SIS model consists in considering two compartments only: the susceptibles and the infectious. The susceptibles become infectious as before, but the infectious do not recover but become susceptible again. This reflects a waning of immunity. The simplest SIS system reads:

\begin{equation*}
\left\{
    \begin{array}{rlll}
    \dot S(t) &= - \alpha SI + \mu I,& \quad    &t>0,  \\
   \dot I(t) &= \alpha SI - \mu I,& \quad    &t>0.
   \end{array}
\right.
\end{equation*}
Allen, Bolker, Lou and Nevai consider in \cite{Allen} a spatial SIS model of the form
\begin{equation*}\label{SIS}
\left\{
    \begin{array}{rlll}
    \partial_t S(t,x) &= d_S \Delta S(t,x) - \alpha \frac{SI}{S+I} + \mu I,& \quad    &t>0, \ x\in \O,  \\
   \partial_t I(t,x) &= d_I \Delta I(t,x) + \alpha \frac{SI}{S+I} - \mu I,& \quad    &t>0, \ x\in \O, \\
   \partial_\nu S(t,x) &= \partial_\nu I(t,x) =0,& \quad &t>0, \ x\in \partial\O.
    \end{array}
\right.
\end{equation*}
They study the existence of stationary states and their stability. \\

Let us mention that, although the SIS models and the SIR models look very similar, a crucial difference between them is that it is easier to find the stationary solutions for the former. Indeed, in the SIS models, there is a mass conservation property that fails to hold true for SIR systems, this makes the analysis more intricate.

\subsection{Results of the paper}

This paper is dedicated to the study of the long-time behavior of spatial SIR models. In a first part, we answer Question \ref{q1} by proving that the model \eqref{syst} exhibits a threshold phenomenon. More precisely, we show that there is a quantity - given by the principal eigenvalue of an elliptic operator - whose value determines whether or not the epidemic propagates. We study how this eigenvalue depends on the parameters of the system, so that we can understand how the features of the model influence the way the epidemic propagates.

In a second part, we investigate Question \ref{q2}. We compare the diffusive model \eqref{syst} with the averaged model \eqref{SIRav}. We show how their predictions concerning the propagation of the epidemic and the impact of the epidemic on the population differ. \\

We assume in the whole paper without further notice that $\O \subset \R^N$ is an open connected bounded set of class $C^2$, that the functions $\alpha,\mu$ in \eqref{syst} are strictly positive and continuous on $\ol\O$, that the matrix fields $A_S,A_I$ are of class $C^1$ and strictly elliptic on $\ol\O$. The initial data $(S_0,I_0)$ will always be such that $S_0, I_0\geq 0$ and $S_0, I_0$ continuous on $\ol \O$. Under these hypotheses, there exist a unique solution $(S(t,x),I(t,x))$ of \eqref{syst}, with $S,I$ of class $C^1$ for $t>0$ and $C^2$ for $x \in \O$. We refer to \cite{DH} for a proof of this fact.\\

Our first result concerns the threshold phenomenon, that is, it gives a criterion that says whether or not the epidemic propagates or fades off (in the sense of Definition \ref{def}).

Let us recall the definition of the principal eigenvalue of an elliptic operator. For $A$ of class $C^1$ and strictly elliptic and $V$ continuous on $\ol\O$, let $L$ be the elliptic operator such that, for $\phi \in C^2(\O)$,
$$
L \ : \ \phi \mapsto -\nabla( A\nabla \phi) - V\phi.
$$
The principal eigenvalue of $L$ with Neuman conormal boundary conditions on $\partial \O$ is the unique $\lambda \in \R$ such that there is $\phi >0$ continuous on $\ol \O$, $\nu \cdot A\nabla \phi =0$ on $\partial \O$ (where $\nu$ is the unit normal outward vector field on $\O$), and such that $L\phi = \lambda \phi$. The existence of a principal eigenvalue for elliptic operators comes from the Krein-Rutman theorem, see \cite{KR}. In the sequel, all elliptic eigenproblems will be understood with conormal boundary conditions.

\begin{theorem}\label{th threshold}
Let $A_I$ be elliptic and of class $C^1(\ol\O)$, let $\alpha,\mu,S_0$ be continuous and positive on $\ol\O$. 
Let $\l$ be the principal eigenvalue of the elliptic operator
$$
\phi \mapsto -\nabla (A_I \nabla \phi) -\left( \alpha \fint_\O S_0 - \mu \right)\phi.
$$
Then
\begin{itemize}
    \item If $\l <0$, the epidemic propagates in \eqref{syst}.
    \item If $\l>0$, the epidemic fades off in \eqref{syst}.
\end{itemize}
\end{theorem}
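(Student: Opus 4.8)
The plan is to reduce both statements to the behaviour of a single scalar, the \emph{weighted infected mass}. Let $\phi>0$ be the principal eigenfunction associated with $\l$ of the adjoint operator $\psi\mapsto-\nabla(A_I^{\,T}\nabla\psi)-(\alpha\fint_\O S_0-\mu)\psi$ (if $A_I$ is symmetric this is the operator in the statement), satisfying the corresponding conormal condition, and set $M(t):=\int_\O\phi\,I(t,\cdot)$. Testing the $I$-equation of \eqref{syst} against $\phi$, integrating $\nabla(A_I\nabla I)$ by parts twice — both conormal conditions $\nu\cdot A_I\nabla I=0$ and $\nu\cdot A_I^{\,T}\nabla\phi=0$ kill the boundary terms — and inserting the eigenrelation $\nabla(A_I^{\,T}\nabla\phi)=-(\alpha\fint_\O S_0-\mu)\phi-\l\phi$, I obtain
\begin{equation*}
\dot M(t)=-\l\,M(t)+\int_\O\alpha\,\phi\,I\,(S-\textstyle\fint_\O S_0)\,dx .
\end{equation*}
I will also use two elementary balances from integrating \eqref{syst} with the boundary conditions: $\frac{d}{dt}\int_\O S=-\int_\O\alpha SI$, so that $\int_\O S_0-\int_\O S_\infty=\int_0^{\infty}\!\!\int_\O\alpha SI=:\rho$ is precisely the quantity to estimate, and $\frac{d}{dt}\int_\O(S+I)=-\int_\O\mu I\le0$, which gives the uniform bound $\int_\O I(t)\le\int_\O(S_0+I_0)$, hence $M(t)\le\|\phi\|_\infty\int_\O(S_0+I_0)$. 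Everything thus hinges on comparing $S$ pointwise with its target average $\fint_\O S_0$ in the correction term.

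For the fading-off case $\l>0$ the comparison is one-sided and easy. Since the reaction term in the $S$-equation is non-positive, the parabolic comparison principle gives $S\le\t S$, where $\t S$ solves the pure diffusion problem $\partial_t\t S=\nabla(A_S\nabla\t S)$, $\nu\cdot A_S\nabla\t S=0$, $\t S(0)=S_0$. As $\t S(t)\to\fint_\O S_0$ uniformly, I fix a time $T$, depending only on $A_S,\O,S_0$, with $\t S(t)-\fint_\O S_0\le \l/(2\|\alpha\|_\infty)$ for $t\ge T$. Then for $t\ge T$ the correction is $\le\tfrac\l2 M$, so $\dot M\le-\tfrac\l2 M$ and $M$ decays exponentially; on $[0,T]$ a crude Gr\"onwall bound gives $M(t)\le M(0)\,e^{CT}$ with $C$ independent of $I_0$. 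Since $M(0)\le\|\phi\|_\infty|\O|\,\|I_0\|_\infty$, this yields $\int_0^\infty M\le C'\|I_0\|_\infty$, and therefore $\rho\le\|\alpha\|_\infty\|S_0\|_\infty(\min_{\ol\O}\phi)^{-1}\int_0^\infty M\le C''\|I_0\|_\infty$. Choosing $\delta=\e/C''$ gives the conclusion.

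For propagation ($\l<0$) the control on $S$ is needed in the opposite, harder direction: I must bound how far $S$ can fall below $\fint_\O S_0$ using only the smallness of the global loss $\rho$. Arguing by contradiction, suppose $\rho<\rho_0$. Writing $S=\t S-R$ with $R(t)=\int_0^t e^{(t-s)\mathcal L_S}(\alpha SI)(s)\,ds\ge0$ ($\mathcal L_S$ the Neumann generator of $A_S$), I estimate, for $t\ge T$, $(\fint_\O S_0-S)_+\le\theta_0+R$ with $\theta_0:=\|\t S(t)-\fint_\O S_0\|_\infty$ small. The crucial point is that $R$ enters quadratically: by duality and the $L^\infty$-contractivity of the Neumann semigroup,
\begin{equation*}
\int_\O I\,R\le\|I(t)\|_\infty\int_0^t\!\!\int_\O\alpha SI\le\|I(t)\|_\infty\,\rho ,
\end{equation*}
while parabolic smoothing together with the slow decay $\|I(t)\|_{L^1}\ge e^{-\|\mu\|_\infty}\|I(t-1)\|_{L^1}$ gives a uniform bound $\|I(t)\|_\infty\le C\|I(t)\|_{L^1}\le C(\min_{\ol\O}\phi)^{-1}M(t)$ for $t\ge T$. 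Feeding this back, $\dot M\ge(|\l|-\theta_0\|\alpha\|_\infty-C'\rho_0)M\ge\tfrac{|\l|}4 M$ for $t\ge T$, once $T$ (hence $\theta_0$) and $\rho_0$ are chosen small. Then $M(t)\ge M(T)e^{|\l|(t-T)/4}\to\infty$ (with $M(T)>0$ by the strong maximum principle), contradicting the uniform bound $M\le\|\phi\|_\infty\int_\O(S_0+I_0)$. Hence $\rho\ge\rho_0=:\e$ for every $I_0\not\equiv0$, which is propagation.

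The main obstacle is exactly this lower control on $S$. Fading off uses only the free upper comparison $S\le\t S$; propagation must instead rule out the self-defeating scenario in which a dip of $S$ below $\fint_\O S_0$ damps the growth of $I$. The resolution is structural: the damping is carried by the Duhamel remainder $R$, which is quadratic in the solution (through $\int_\O IR\le\|I\|_\infty\rho$ with $\|I\|_\infty\lesssim\|I\|_{L^1}\asymp M$), whereas the growth forced by $\l<0$ is linear, so for $\rho$ small the growth wins. Making the smoothing bound $\|I(t)\|_\infty\le C\|I(t)\|_{L^1}$ uniform in $t$ and in $I_0$ — relying only on the uniform upper bound $\alpha S-\mu\le\|\alpha\|_\infty\|S_0\|_\infty$ on the zeroth-order coefficient and on the ellipticity of $A_I$ — is the technical heart of the argument.
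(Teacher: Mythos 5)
Your proof is correct, and in the propagation direction it takes a genuinely different route from the paper's. The paper first proves that $S(t,\cdot)$ converges uniformly to a constant $S_\infty$ (Proposition \ref{prop cv unif}), shows by a subsolution argument that the principal eigenvalue $\Lambda$ of $-\nabla\cdot(A_I\nabla\,\cdot\,)-(\alpha S_\infty-\mu)$ is non-negative (Lemma \ref{lem thr}: the limit state must be linearly stable, else $I$ could not vanish), and then integrates the two eigenvalue relations against each other to obtain the explicit final-size bound $S_\infty\le\fint_\O S_0-\vert\l\vert/\max\alpha$, from which propagation is immediate with $\e=\vert\O\vert\,\vert\l\vert/\max\alpha$. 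You instead run a dynamic contradiction on the weighted mass $M(t)=\int_\O\phi I$: your identity $\dot M=-\l M+\int_\O\alpha\phi I\,(S-\fint_\O S_0)$ checks out (double integration by parts with the two conormal conditions, plus the adjoint eigenrelation), the Duhamel splitting $S=\t S-R$ with mass conservation of the conormal divergence-form semigroup legitimately gives $\int_\O I R\le\Vert I\Vert_{L^\infty}\rho$, and the Harnack/smoothing bound $\Vert I(t)\Vert_{L^\infty}\le C\Vert I(t)\Vert_{L^1}$ is uniform in $I_0$ because $0\le S\le\Vert S_0\Vert_{L^\infty}$ keeps the zeroth-order coefficient bounded (this is the same technology as the fourth point of Lemma \ref{lem use}, shifted in time via your slow-decay inequality, which follows from $\frac{d}{dt}\int_\O I\ge-\Vert\mu\Vert_{L^\infty}\int_\O I$). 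The quadratic-damping-versus-linear-growth mechanism then closes the argument, with $\rho_0$ independent of $I_0$ as required by Definition \ref{def}. Comparing the two: the paper's route yields an explicit, quantitative $\e$ and a stability lemma of independent interest, but leans on the full convergence machinery of Section \ref{cv sol}; yours bypasses the pointwise limit $S_\infty$ in this step (monotonicity of $\int_\O S$ suffices to define the loss $\rho$) and handles non-symmetric $A_I$ cleanly by working with the adjoint eigenfunction, whereas the paper's Step 1 tacitly uses symmetry of $A_I$ when it integrates $\phi$ against $\nabla\cdot(A_I\nabla\psi)$ and identifies the result with $\Lambda\int\phi\psi$; the price is that your threshold $\rho_0$ is not explicit. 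For fading off the two arguments are close cousins, both comparing $S$ from above with the pure diffusion flow and extracting exponential decay of $I$ at rate $\l/2$; the paper pushes further to a pointwise subsolution for $S$ giving $S_\infty\ge\left(\fint_\O S_0\right)e^{-2C\Vert I_0\Vert_{L^\infty}/\l}$, while you integrate the loss directly to get $\rho\le C\Vert I_0\Vert_{L^\infty}$, which is shorter and equally sufficient.
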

Let us explain heuristically this result. The question to find whether or not the epidemic propagates requires to find the long-time behavior of the solutions of \eqref{syst}. When dealing with monotone systems, a standard approach is to study the stability of stationary solutions. The problem here is that \eqref{syst} has many stationary solutions: every couple $(\sigma,0)$, with $\sigma \in \R$ is a stationary solution, and many of these are linearly stable. In addition, \eqref{syst} is not monotonous.

However, we are interested in the situation where $I_0$ is small. In this case, we can expect $I$ to be small for a ``long time". Then, $\partial_t S \approx \nabla (A_S \nabla S)$, i.e., the evolution of $S$ is mostly governed by the diffusion process. Therefore, by the time $I$ gets large enough, we would have $S\approx \fint S_0$. Hence, $\partial_t I \approx \nabla (A_I \nabla I) +(\alpha \fint S_0 -\mu)I$, and then we can expect the dynamic of $I$ to be given by the principal eigenvalue of the elliptic operator $-\nabla A_I \nabla  - (\alpha \fint S_0 -\mu)$.\\

Theorem \ref{th threshold} does not say anything about the case where $\l=0$. We leave this as an open question; however, analogy with the case studied in \cite{D} (where $A_S \equiv 0$) suggests that we should have fading-off in this case.\\


Theorem \ref{th threshold} allows to find several qualitative properties of the system \eqref{syst}. Indeed, because the sign of $\l$ determines whether or not the epidemic propagates, understanding how $\l$ depends on the parameters of the system will indicate how the features of the model influence the propagation of the epidemic. 

This is the object of the next proposition, where we make explicit the dependence of the principal eigenvalue $\l$ by denoting it as a function of the parameters of the system.
\begin{prop}\label{prop quali}

For $A_I$ elliptic of class $C^1$ and for $\alpha,\mu,S_0$ in $C^0(\ol\O)$, let $\lambda_I(A_I,\alpha,\mu,S_0)$ denote the principal eigenvalue of the operator 
$$
\phi \mapsto -\nabla\cdot(A_I \nabla \phi) - \left(\alpha \fint_\O S_0 -\mu\right)\phi.
$$
Then,
\begin{itemize}
    \item $\lambda_1$ is nondecreasing with respect to $A_I$, strictly increasing with respect to $\mu$ and strictly decreasing with respect to $\alpha,\fint S_0$, i.e., if $\t A_I \geq A_I$, $\t \mu \geq \mu$, $\t \alpha \leq \alpha$, $\fint \t S_0 \leq \fint_\O S_0$, then
    $$
    \lambda_I(A_I,\alpha,\mu,S_0) \leq \lambda_I(\t A_I,\t \alpha,\t \mu,\t S_0),
    $$
    and this inequality is strict as soon as either $\t \mu > \mu$, $\t \alpha < \alpha$ or $\fint \t S_0 < \fint_\O S_0$. In addition, if $\alpha\fint_\O S_0 -\mu$ is not constant, the monotonicity with respect to $A_I$ is also strict.

    \item $\lambda_1$ converges when the diffusion of the infectious goes to infinity, that is, denoting $I_d$ the identity matrix:
    $$\lambda_I(d_I I_d,\alpha,\mu,S_0) \underset{d_I \to +\infty}{\nearrow}\fint \mu - \fint \alpha \fint S_0.$$

     \item $\lambda_1$ converges when the diffusion of the infectious goes to zero:
    $$\lambda_I(d_II_d,\alpha,\mu,S_0) \underset{d_I \to 0}{\searrow} \min_{x\in\O}\left\{\mu(x)-\alpha(x)\fint S_0\right\}.$$
\end{itemize}
\end{prop}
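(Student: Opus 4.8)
The plan is to prove all three items through the variational (Rayleigh quotient) characterization of the principal eigenvalue. Writing $V:=\alpha\fint_\O S_0-\mu$ and recalling that $A_I$ is symmetric (the order $\t A_I\ge A_I$ being the order of quadratic forms) with conormal Neumann conditions, the operator is self-adjoint and
$$\lambda_I(A_I,\alpha,\mu,S_0)=\min_{\phi\in H^1(\O)\setminus\{0\}}Q_{A_I,V}(\phi),\qquad Q_{A_I,V}(\phi):=\frac{\di\int_\O\nabla\phi\cdot A_I\nabla\phi-\int_\O V\phi^2}{\di\int_\O\phi^2},$$
the minimum being attained at the positive principal eigenfunction and the conormal condition arising as the natural boundary condition of the quotient. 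Every assertion below is obtained by manipulating $Q_{A_I,V}$.

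For monotonicity, observe that for each fixed $\phi$ the quotient $Q_{A_I,V}(\phi)$ is nondecreasing in $A_I$ (as a quadratic form) and nondecreasing in $-V=\mu-\alpha\fint_\O S_0$, hence nondecreasing in $\mu$ and nonincreasing in $\alpha$ and in $\fint_\O S_0$. Under the stated hypotheses one has $\t V\le V$ and $\t A_I\ge A_I$, so $Q_{\t A_I,\t V}(\phi)\ge Q_{A_I,V}(\phi)$ for every $\phi$, and taking minima yields the non-strict ordering of the eigenvalues. For strictness in $\mu,\alpha,\fint_\O S_0$ I would evaluate at the principal eigenfunction $\t\phi>0$ of the larger problem and use $\lambda_I(A_I,\alpha,\mu,S_0)\le Q_{A_I,V}(\t\phi)$ to obtain
$$\lambda_I(\t A_I,\t\alpha,\t\mu,\t S_0)-\lambda_I(A_I,\alpha,\mu,S_0)\ \ge\ \frac{\di\int_\O(V-\t V)\t\phi^2}{\di\int_\O\t\phi^2}\ >\ 0,$$
the strict sign holding because $V-\t V$ is continuous, $\ge0$, and positive on a nonempty open set as soon as $\t\mu>\mu$, $\t\alpha<\alpha$, or $\fint_\O\t S_0<\fint_\O S_0$. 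For the strict monotonicity in $A_I$ when $V$ is nonconstant, suppose equality held with $\t A_I>A_I$: the same chain forces $\int_\O\nabla\t\phi\cdot(\t A_I-A_I)\nabla\t\phi=0$ and $Q_{A_I,V}(\t\phi)=\lambda_I(A_I,\ldots)$, so by uniqueness $\t\phi$ is the principal eigenfunction $\phi$ of the smaller problem; but $V$ nonconstant makes $\phi$ nonconstant (otherwise $-V\phi=\lambda_I\phi$ would force $V$ constant), so $\nabla\phi\neq0$ on a set of positive measure and positive definiteness of $\t A_I-A_I$ gives $\int_\O\nabla\phi\cdot(\t A_I-A_I)\nabla\phi>0$, a contradiction.

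For the limit $d_I\to+\infty$, testing $Q_{d_II_d,V}$ with $\phi\equiv1$ gives $\lambda_I(d_II_d,\alpha,\mu,S_0)\le-\fint_\O V=\fint_\O\mu-\fint_\O\alpha\,\fint_\O S_0$ for every $d_I$ (using that $\fint_\O S_0$ is constant); by the monotonicity in $A_I$ the map $d_I\mapsto\lambda_I(d_II_d,\ldots)$ is nondecreasing, hence converges to some $L\le-\fint_\O V$. To see $L=-\fint_\O V$, let $\phi_{d_I}$ be the $L^2$-normalized principal eigenfunctions; from $\lambda_I=d_I\int_\O|\nabla\phi_{d_I}|^2-\int_\O V\phi_{d_I}^2$ bounded and $|\int_\O V\phi_{d_I}^2|\le\|V\|_{L^\infty}$ one gets $\int_\O|\nabla\phi_{d_I}|^2=O(1/d_I)\to0$, so by the Poincar\'e--Wirtinger inequality and the normalization $\phi_{d_I}\to|\O|^{-1/2}$ in $L^2(\O)$, whence $\int_\O V\phi_{d_I}^2\to\fint_\O V$ and $\lambda_I\ge-\int_\O V\phi_{d_I}^2\to-\fint_\O V$, giving $L=-\fint_\O V$.

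For the limit $d_I\to0$, dropping the nonnegative Dirichlet term and using $\frac{\int_\O(-V)\phi^2}{\int_\O\phi^2}\ge\min_{\ol\O}(-V)$ for every $\phi$ gives $\lambda_I(d_II_d,\ldots)\ge\min_{\ol\O}(-V)=\min_{x\in\O}\{\mu(x)-\alpha(x)\fint_\O S_0\}$ for all $d_I>0$, and the map being nondecreasing it decreases to some $L'\ge\min(-V)$. For the matching bound I would fix $\e>0$ and a point $x_0$ realizing $\min_{\ol\O}(-V)$, choose a fixed $\phi_\e\in H^1(\O)$ supported near $x_0$ with $\frac{\int_\O(-V)\phi_\e^2}{\int_\O\phi_\e^2}\le\min(-V)+\e$, and estimate $\lambda_I(d_II_d,\ldots)\le d_I\,\frac{\int_\O|\nabla\phi_\e|^2}{\int_\O\phi_\e^2}+\min(-V)+\e$, which tends to $\min(-V)+\e$ as $d_I\to0$; letting $\e\to0$ yields $L'=\min(-V)$. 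The bounds and the monotonicity are routine; the delicate points, which I would treat with care, are the strict monotonicity in $A_I$ — where one must exclude that the eigenfunction is flat exactly on the region where the diffusion is enlarged, which is precisely where the nonconstancy of $\alpha\fint_\O S_0-\mu$ is used — and the sharpness of the limit $d_I\to+\infty$, which hinges on the compactness furnished by Poincar\'e--Wirtinger together with the vanishing of the Dirichlet energy.
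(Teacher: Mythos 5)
Your proof is correct and follows essentially the same route as the paper: the Rayleigh quotient characterization of $\lambda_1$ with the principal eigenfunction of the comparison problem as test function for the monotonicity and strictness claims, a Dirichlet-energy bound of order $1/d_I$ forcing the normalized eigenfunctions to become constant for the limit $d_I\to+\infty$, and test functions concentrating at a minimizer of $\mu-\alpha\fint_\O S_0$ for the limit $d_I\to 0$. The only differences are cosmetic --- you conclude the large-diffusion limit via Poincar\'e--Wirtinger and by dropping the nonnegative gradient term, where the paper integrates the eigenvalue equation over $\O$ and passes to the limit along a subsequence; you use a fixed $\e$-localized test function instead of a Dirac-approximating sequence; and your discussion of strict monotonicity in $A_I$ states correctly what the paper's wording garbles, namely that $\nabla\phi\not\equiv 0$ precisely when $\alpha\fint_\O S_0-\mu$ is \emph{not} constant.
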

Combining Proposition \ref{prop quali} with Theorem \ref{th threshold} allows to derive some qualitative properties for the spread of epidemics. In particular, we directly see that, because $\lambda_1$ does not depend on the diffusivity of the susceptible individuals, then only the diffusivity of the infectious individuals plays a role on the propagation of the population. This may be somewhat surprising.
However, this is somewhat reminiscent of the result of  Hosono and Ilyas from \cite{HI} recalled in Section \ref{past results}: the critical speed of traveling waves for the homogeneous SIR system \eqref{systH} with $\O=\mathbb{R}$ is independant of $d_S$.

Proposition \ref{prop quali} also suggests that the larger the recovery rate or the population of susceptibles and the smaller the contamination rate, the less likely the epidemic propagates, which is not surprising.

Combining Proposition \ref{prop quali} with Theorem \ref{th threshold}, we also get the following more interesting result:
\begin{cor}\label{cor prop}
Let $\alpha,\mu,S_0$ be positive and continuous on $\ol \O$. If $\alpha,\mu$ are such that

    $$\frac{\fint_\O\alpha\fint_\O S_0}{\fint_\O \mu}<1<\max_{x\in\O}\left\{\frac{\alpha(x)\fint_\O S_0}{\mu(x)}\right\},$$
 then there is $d^\star>0$ such that the epidemic propagates for \eqref{syst} with $A_I = d_I I_d$ if $d_I<d^\star$ and fades off if $d_I>d^\star$.
\end{cor}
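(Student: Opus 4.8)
The plan is to combine the threshold criterion of Theorem~\ref{th threshold} with the two limiting behaviors and the monotonicity established in Proposition~\ref{prop quali}, treating $\l$ as a function of the single scalar parameter $d_I$ through the choice $A_I = d_I I_d$. Write $g(d_I) := \lambda_I(d_I I_d,\alpha,\mu,S_0)$. By Theorem~\ref{th threshold}, the epidemic propagates when $g(d_I)<0$ and fades off when $g(d_I)>0$, so the entire statement reduces to showing that $g$ is continuous, strictly increasing, and crosses zero exactly once as $d_I$ ranges over $(0,+\infty)$. The hypotheses on $\alpha,\mu,S_0$ are precisely engineered to force the two endpoint limits to straddle zero.

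First I would unpack the two inequalities in the hypothesis in terms of the endpoint limits supplied by Proposition~\ref{prop quali}. The limit as $d_I\to 0$ is $\min_{x\in\O}\{\mu(x)-\alpha(x)\fint_\O S_0\}$, and this quantity is strictly negative precisely when there exists some $x$ with $\mu(x)<\alpha(x)\fint_\O S_0$, i.e.\ when $\max_{x\in\O}\{\alpha(x)\fint_\O S_0/\mu(x)\}>1$ (using $\mu>0$). The limit as $d_I\to+\infty$ is $\fint_\O\mu-\fint_\O\alpha\,\fint_\O S_0$, which is strictly positive precisely when $\fint_\O\alpha\,\fint_\O S_0/\fint_\O\mu<1$ (again using $\fint_\O\mu>0$). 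Thus the two-sided inequality in the statement is exactly equivalent to
$$
\lim_{d_I\to 0}g(d_I)<0<\lim_{d_I\to+\infty}g(d_I).
$$

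Next I would invoke the strict monotonicity in $A_I$ from the first bullet of Proposition~\ref{prop quali}: for $\t d_I>d_I$ we have $\t d_I I_d\geq d_I I_d$ as matrix fields, and since $\alpha\fint_\O S_0-\mu$ is nonconstant under our hypothesis (it must take a value with $>\mu$ somewhere and, by the first inequality, cannot be everywhere positive), the monotonicity is strict, so $g$ is strictly increasing on $(0,+\infty)$. Combined with continuity of $d_I\mapsto g(d_I)$ — which follows from standard continuous dependence of the principal eigenvalue on the operator coefficients, guaranteed by Krein--Rutman theory — the intermediate value theorem yields a unique $d^\star\in(0,+\infty)$ with $g(d^\star)=0$. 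Strict monotonicity then gives $g(d_I)<0$ for $d_I<d^\star$ and $g(d_I)>0$ for $d_I>d^\star$, and applying Theorem~\ref{th threshold} on each side finishes the proof.

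The one point deserving care, and the likeliest obstacle, is the continuity of $g$ together with the fact that the endpoint limits are genuinely approached (not merely bounds): Proposition~\ref{prop quali} asserts the monotone convergence $\nearrow$ and $\searrow$ to the stated limits, so for small enough $d_I$ we indeed have $g(d_I)$ close to the strictly negative infimum-limit and for large enough $d_I$ close to the strictly positive supremum-limit, whence $g$ takes a strictly negative and a strictly positive value. I would state continuity of the principal eigenvalue with respect to the diffusion coefficient as a consequence of the Krein--Rutman framework already cited, rather than reproving it; the remaining argument is then purely the monotone intermediate value theorem.
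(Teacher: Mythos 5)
Your proposal is correct and is essentially the paper's own proof: both reduce the corollary, via Theorem~\ref{th threshold}, to the sign of $g(d_I):=\lambda_I(d_I I_d,\alpha,\mu,S_0)$, observe that the hypothesis is exactly equivalent to $\lim_{d_I\to 0}g<0<\lim_{d_I\to+\infty}g$ by the second and third points of Proposition~\ref{prop quali}, and invoke the strict monotonicity in $A_I$ from its first point, noting (as you do) that $\alpha\fint_\O S_0-\mu$ cannot be constant under the standing inequalities. The only cosmetic divergence is that the paper never needs continuity of $g$: it sets $d^\star:=\sup\{d>0:\lambda_1(d)\le 0\}$, after which strict monotonicity alone yields $g<0$ on $(0,d^\star)$ and $g>0$ on $(d^\star,+\infty)$, whereas your route through the intermediate value theorem does require the continuity you assert --- a true fact, but one obtained most directly from the Rayleigh formula \eqref{Rayleigh} (which exhibits $\lambda_1(d_I)$ as an infimum of functions affine in $d_I$, hence concave and continuous on $(0,+\infty)$) rather than from Krein--Rutman theory as such.
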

This result gives us situations where increasing the diffusion of the infectious individuals can trigger an epidemic that would fade off if this diffusion were small. This may seem surprising at first. However, one has to keep in mind that increasing the diffusion of the infectious individuals also increases their scattering, which in turns can prevent the formation of clusters of infections.\\

The results above answer Question \ref{q1}. We now turn to Question \ref{q2}, that is, we investigate how the predictions of the diffusive model \eqref{syst} and of the averaged model \eqref{SIRav} are related.

We start with investigating the differences concerning whether or not the epidemic propagates. We have the following:
\begin{req}\label{req av}
Let $\alpha, \mu, S_0$ be positive and continuous on $\ol\O$. Then, owing to Theorem~\ref{th SIR}, we know that the epidemic propagates for the averaged model \eqref{SIRav} if and only if
$$
\frac{\fint_\O \alpha \fint_\O S_0}{\fint_\O \mu}>1.
$$
\end{req}

Owing to Theorem \ref{th threshold} and Proposition \ref{prop quali}, we get the following:

\begin{cor}\label{cor quali}
Let $\alpha,\mu,S_0$ be positive and continuous on $\ol\O$.

\begin{itemize}
    \item If $\alpha, \mu$ are constant, define 
    $$
    \t R_0 := \frac{\alpha\fint_\O S_0}{\mu}.
    $$
    Then the epidemic propagates for both the diffusive model \eqref{syst} and for the averaged model \eqref{SIRav} if $\t R_0>1$ and fades-off if $\t R_0 <1$.
    
    \item If
    $$
    \frac{\fint_\O \alpha \fint_\O S_0}{\fint_\O \mu } >1,
    $$
    then the epidemic propagates for both the diffusive model \eqref{syst} and for the averaged model \eqref{SIRav}.
    
    \item If $\alpha,\mu$ are such that
     $$\frac{\fint_\O\alpha\fint_\O S_0}{\fint_\O \mu}<1<\max_{x\in\O}\left\{\frac{\alpha(x)\fint_\O S_0}{\mu(x)}\right\},$$
     and if $d_I>0$ is small enough, then the epidemic propagates in the diffusive model \eqref{syst} with $A_I = d_I I_d$ but fades off in the averaged model \eqref{SIRav}.
     
\end{itemize}
\end{cor}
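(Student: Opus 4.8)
The three assertions all follow by playing Theorem~\ref{th threshold}, Proposition~\ref{prop quali} and Remark~\ref{req av} against one another, so the plan is to treat the diffusive model \eqref{syst} through the sign of the principal eigenvalue $\l$ and the averaged model \eqref{SIRav} through the scalar threshold $\frac{\fint_\O\alpha\fint_\O S_0}{\fint_\O\mu}$, and then to match the two conclusions in each regime.

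For the first item I would exploit that, when $\alpha,\mu$ are constant, the potential $\alpha\fint_\O S_0-\mu$ is a constant $c$, so the constant function $\phi\equiv 1$ satisfies $-\nabla\cdot(A_I\nabla\phi)=0$, the conormal condition $\nu\cdot A_I\nabla\phi=0$, and $-c\,\phi=(\mu-\alpha\fint_\O S_0)\phi$. By uniqueness of the principal eigenpair this forces $\l=\mu-\alpha\fint_\O S_0=\mu(1-\t R_0)$, whose sign is that of $1-\t R_0$. Theorem~\ref{th threshold} then yields propagation for \eqref{syst} exactly when $\t R_0>1$ and fading-off when $\t R_0<1$; since $\fint_\O\alpha=\alpha$ and $\fint_\O\mu=\mu$ in the constant case, Remark~\ref{req av} gives the same dichotomy for \eqref{SIRav}.

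For the second item the averaged part is immediate from Remark~\ref{req av}. For \eqref{syst} I need $\l<0$ for an \emph{arbitrary} elliptic $A_I$, not merely for $A_I=d_I I_d$, and this is the one genuinely non-mechanical point. I would resolve it with the monotonicity in $A_I$ from Proposition~\ref{prop quali}: since $A_I$ is continuous on the compact $\ol\O$, its largest eigenvalue is bounded by some $\Lambda$, so $A_I\le d_I I_d$ for $d_I\ge\Lambda$, and monotonicity gives $\l=\lambda_I(A_I,\alpha,\mu,S_0)\le\lambda_I(d_I I_d,\alpha,\mu,S_0)$. The increasing limit of Proposition~\ref{prop quali} bounds the right-hand side by $\fint_\O\mu-\fint_\O\alpha\fint_\O S_0$, and the hypothesis $\frac{\fint_\O\alpha\fint_\O S_0}{\fint_\O\mu}>1$ makes this quantity negative, so $\l<0$ and Theorem~\ref{th threshold} gives propagation. (Equivalently, $\l\le\fint_\O\mu-\fint_\O\alpha\fint_\O S_0$ is just the Rayleigh quotient evaluated at the constant test function, independent of $A_I$.)

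For the third item, the strict inequality $\frac{\fint_\O\alpha\fint_\O S_0}{\fint_\O\mu}<1$, via Remark~\ref{req av} and Theorem~\ref{th SIR}, gives fading-off for \eqref{SIRav}. For \eqref{syst} with $A_I=d_I I_d$ I would invoke the $d_I\to 0$ limit of Proposition~\ref{prop quali}: the condition $\max_{x\in\O}\frac{\alpha(x)\fint_\O S_0}{\mu(x)}>1$ produces a point $x_0$ with $\mu(x_0)-\alpha(x_0)\fint_\O S_0<0$, whence $\min_{x\in\O}\{\mu(x)-\alpha(x)\fint_\O S_0\}<0$. Since $\lambda_I(d_I I_d,\alpha,\mu,S_0)$ decreases to this negative limit as $d_I\to 0$, it is strictly negative for all small enough $d_I$, and Theorem~\ref{th threshold} yields propagation, giving the desired discrepancy with the averaged model. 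Everything beyond the $A_I$-uniformity issue flagged above is a matter of reading off signs from the stated results.
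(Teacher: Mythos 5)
Your proof is correct and follows essentially the same route as the paper's (very terse) proof: the constant eigenfunction identifies $\l=\mu-\alpha\fint_\O S_0$ for the first point, and the second and third points are read off from Proposition~\ref{prop quali} and Theorem~\ref{th threshold} together with Remark~\ref{req av}. The only place you go beyond the paper is the second point, where the paper just cites the $d_I\to+\infty$ limit of Proposition~\ref{prop quali} --- which formally covers only $A_I=d_I I_d$ --- while you correctly handle general elliptic $A_I$, either by monotonicity in $A_I$ ($A_I\leq d_I I_d$ for large $d_I$, then the increasing convergence gives $\l\leq \fint_\O\mu-\fint_\O\alpha\fint_\O S_0<0$) or, most directly, by the constant test function in the Rayleigh formula \eqref{Rayleigh}, which is exactly the bound used inside the paper's proof of Proposition~\ref{prop quali} and is independent of $A_I$.
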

Let us comment on this result. The first point can be seen as a direct generalization of the formula of the basic reproduction number of Kermack and McKendrick given by Theorem \ref{th SIR}, it says that, when $\alpha,\mu$ are constant, then the diffusive model \eqref{syst} and the averaged model \eqref{SIRav} agree on whether or not the epidemic propagates.

The second point tells us that this is still partly true when $\alpha$ or $\mu$ is not constant. In this case, if the averaged model \eqref{SIRav} predicts that the epidemic propagates, then so does the diffusive model \eqref{syst}.

However, the third point tells us that the reciprocal fails to hold true in some cases. \\


As a consequence of the third point of the corollary, as soon as either $\alpha$ or $\mu$ are not constant,  we can exhibit situations where the epidemic propagates in the diffusive model \eqref{syst} but fades off for the averaged model \eqref{SIRav}. In particular, owing to Definition \ref{def}, this means that there are situations where, for $S_0$ given, up to taking $I_0$ small enough, the final number of susceptible individuals will be much larger in the averaged model than in the diffusive one.\\

Let us now consider more precisely the case where $\alpha, \mu, S_0$ are constant. Then, as mentioned in Corollary \ref{cor quali}, the diffusive and the averaged model agree on whether or not the epidemic propagates. 

However, our next results says that the two models give a different prediction in what concerns the final number of susceptible individuals.\\




\begin{theorem}\label{th final state}
Let $\alpha,\mu,S_0$ be positive constants. Let $I_0$ be non-negative and continuous on $\ol\O$. Let $(S(t,x),I(t,x))$ be the solution of \eqref{systH} arising from the initial datum $(S_0,I_0)$. Let $(S^A(t),I^A(t))$ be the solution of \eqref{SIRav} arising from the initial datum $(S_0, \fint_\O I_0)$. Then
	$$
	S_{\infty} \geq S_{\infty}^A,
	$$ 
	where $S_\infty := \lim_{t\to+\infty}S(t,x)$ and $S^A_\infty := \lim_{t\to+\infty}S^A(t)$.
	
	Moreover, the inequality is strict as soon as $I_0$ is not constant. If $I_0$ is constant, it is an equality.
\end{theorem}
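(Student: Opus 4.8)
The plan is to build a Lyapunov functional for the homogeneous system \eqref{systH} that mimics the quantity conserved by the ODE \eqref{SIR}, and then to compare its limit with the final-size relation \eqref{eq th} of the averaged model. Set $f(x):=\frac{\alpha}{\mu}x-\ln x$ and
\[
\mathcal{E}(t):=\int_\Omega\Big(f(S(t,x))+\tfrac{\alpha}{\mu}I(t,x)\Big)\,dx .
\]
Differentiating in $t$ and substituting \eqref{systH}, the reaction terms cancel exactly (this is the spatial analogue of the conservation of the energy $\mc E$ recalled after Theorem \ref{th SIR}): the contributions of $\frac{\alpha}{\mu}\partial_t S$, $-\partial_t S/S$ and $\frac{\alpha}{\mu}\partial_t I$ to the zeroth-order part sum to zero. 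The diffusion terms $\frac{\alpha}{\mu}d_S\Delta S$ and $\frac{\alpha}{\mu}d_I\Delta I$ integrate to zero by the Neumann condition, so the only survivor is $-d_S\int_\Omega\Delta S/S$. One integration by parts (again using $\partial_\nu S=0$, and the positivity $S>0$ for $t>0$ from the strong maximum principle, which makes $\ln S$ and $1/S$ meaningful) yields the dissipation identity
\[
\frac{d}{dt}\mathcal{E}(t)=-d_S\int_\Omega\frac{|\nabla S(t,x)|^2}{S(t,x)^2}\,dx\le 0 .
\]
Thus $\mathcal{E}$ is non-increasing: the diffusion of the susceptibles dissipates the ODE invariant.

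Next I would record the long-time behaviour of \eqref{systH}: the infectious density vanishes, $\|I(t,\cdot)\|_{L^\infty(\Omega)}\to 0$, and the susceptible density converges to a spatial constant $S_\infty$ (once $\alpha SI$ becomes negligible, the Neumann heat flow homogenizes $S$ towards its spatial mean). Since $\mathcal{E}$ is bounded below by $|\Omega|f(\mu/\alpha)$ it converges, and finiteness of the limit prevents $S_\infty$ from touching $0$. Passing to the limit $t\to+\infty$ and using that $S_0$ is constant gives $|\Omega|f(S_\infty)=\mathcal{E}(\infty)\le \mathcal{E}(0)=|\Omega|f(S_0)+\frac{\alpha}{\mu}\int_\Omega I_0$, hence $f(S_\infty)\le f(S_0)+\frac{\alpha}{\mu}\fint_\Omega I_0$. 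On the other hand the averaged solution starts from $(S_0,\fint_\Omega I_0)$, so \eqref{eq th} in Theorem \ref{th SIR} reads $f(S_\infty^A)=f(S_0)+\frac{\alpha}{\mu}\fint_\Omega I_0$. Therefore $f(S_\infty)\le f(S_\infty^A)$. (If one prefers not to invoke spatial constancy of $S_\infty$, Jensen's inequality for the convex $f$ gives $\fint_\Omega f(S_\infty)\ge f(\fint_\Omega S_\infty)$, and the same conclusion holds for the averaged final density $\fint_\Omega S_\infty$, which is the quantity appearing in Definition \ref{def}.)

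I would then conclude using the shape of $f$: it is strictly convex, minimal at $\mu/\alpha$, and strictly decreasing on $(0,\mu/\alpha]$. Since $\fint_\Omega I_0>0$ whenever $I_0\not\equiv 0$, the final-size relation forces $S_\infty^A<\mu/\alpha$. If $S_\infty\le \mu/\alpha$, both points lie on the decreasing branch and $f(S_\infty)\le f(S_\infty^A)$ gives $S_\infty\ge S_\infty^A$; if instead $S_\infty>\mu/\alpha$, then $S_\infty>\mu/\alpha>S_\infty^A$ directly. Either way $S_\infty\ge S_\infty^A$. For strictness, if $I_0$ is non-constant then $\partial_t S(0,\cdot)=-\alpha S_0 I_0$ is non-constant, so $S(t,\cdot)$ is non-constant for small $t>0$; the dissipation identity then gives $\mathcal{E}(\infty)<\mathcal{E}(0)$ strictly, whence $f(S_\infty)<f(S_\infty^A)$ and $S_\infty>S_\infty^A$. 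When $I_0$ is constant, the spatially homogeneous solution of \eqref{SIRav} with datum $(S_0,\fint_\Omega I_0)=(S_0,I_0)$ also solves \eqref{systH}, so by uniqueness $S\equiv S^A$ and $S_\infty=S_\infty^A$.

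The dissipation identity is routine; the genuine work lies in the asymptotics. The main obstacle I anticipate is establishing rigorously that $\|I(t,\cdot)\|_{L^\infty}\to 0$, that $S(t,\cdot)$ converges to a constant, and that the limit may be interchanged with the integral defining $\mathcal{E}$, i.e. $\lim_{t\to+\infty}\mathcal{E}(t)=|\Omega|f(S_\infty)$ — where the lower bound on $\mathcal{E}$ is exactly what guarantees $S_\infty>0$ and hence the integrability of $\ln S_\infty$ needed to pass to the limit.
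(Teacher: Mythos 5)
Your proposal is correct and follows essentially the same route as the paper: the paper proves the identity \eqref{eq T12}, which is exactly your dissipation identity for $\mathcal{E}(t)=\int_\O\bigl(f(S)+\tfrac{\alpha}{\mu}I\bigr)$ integrated between $T_1$ and $T_2$, then passes to the limit and compares with the final-size relation \eqref{eq th} for the averaged model, with strictness from $\nabla S\not\equiv 0$ when $I_0$ is non-constant. Two minor remarks: your sign is the right one (the paper's displayed \eqref{eq T12} carries the dissipation term with a $+$ and once writes $S$ instead of $S^2$ in the denominator, evidently typos, since the inequality the paper then draws, $f(S_\infty)\leq f(S_0)+\tfrac{\alpha}{\mu}\fint_\O I_0$, agrees with yours), and your case analysis via $S_\infty^A<\mu/\alpha$ neatly sidesteps the bound $S_\infty\leq\mu/\alpha$, which the paper asserts without comment (it follows from Lemma \ref{lem thr} with constant coefficients).
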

This result tells us that the averaged model \eqref{SIRav} always underestimate the number of final number of susceptible individuals compared to the homogeneous diffusive model \eqref{systH}. Observe that, if either $\alpha$ or $\mu$ were not constants, then the third point of the above Corollary \ref{cor quali} tells us that we can have the opposite : the averaged model can overestimate the number of infected individuals.

However, this difference disappears in the limit where $I_0$ goes to zero.


\begin{prop}\label{prop equal}
    Let $\alpha,\mu,S_0$ be positive constants. Let $(I^n_0)_{n\in\N}$ be a sequence of non-negative, continuous functions on $\ol\O$ such that $\vert\vert I^n_0\vert\vert_{L^\infty}\to 0$ as $n$ goes to $+\infty$.

    Let $(S_n(t,x),I_n(t,x))$ be the solution of \eqref{systH} with initial datum $(S_0,I^n_0)$ and let $(S^A_n(t),I^A_n(t))$ be the solution of \eqref{SIRav} with initial datum $(S_0,\fint_\O I^n_0)$.

    Let $S_\infty^n := \lim_{t\to+\infty}S_n(t,x)$ and ${S_\infty}^{A,n} := \lim_{t\to+\infty}S_n^A(t)$. Then, there is $C>0$ such that, if $d_S,d_I>C$, we have
    $$
     \lim_{n\to+\infty}S_{\infty}^n  =   \lim_{n \to \infty} {S_{\infty}}^{A,n}.
    $$
\end{prop}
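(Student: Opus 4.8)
The plan is to reduce both final states to the Kermack--McKendrick relation and then to show that the spatial inhomogeneity of the diffusive solution becomes negligible as $\|I^n_0\|_{L^\infty}\to0$, provided the diffusions are large. Throughout write $f(x):=\frac{\alpha}{\mu}x-\ln x$. For the averaged model, Theorem~\ref{th SIR} gives $f(S^{A,n}_\infty)=f(S_0)+\frac{\alpha}{\mu}\fint_\O I^n_0$, so the right-hand side tends to $f(S_0)$. For the diffusive model I would first record the long-time behaviour: using $0\le S_n\le S_0$ (comparison), the relation $\frac{d}{dt}\int_\O(S_n+I_n)=-\mu\int_\O I_n$, and standard parabolic arguments, $I_n(t,\cdot)\to0$ and $S_n(t,\cdot)$ converges to a spatially constant limit $S^n_\infty>0$. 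The analytic heart is the functional $\mathcal E_n(t):=\int_\O\big(\frac{\alpha}{\mu}(S_n+I_n)-\ln S_n\big)$, which, after integration by parts using the Neumann condition, satisfies $\frac{d}{dt}\mathcal E_n=-d_S\int_\O\frac{|\nabla S_n|^2}{S_n^2}\le0$. Letting $t\to\infty$ yields the exact identity
\[
f(S^n_\infty)=f(S_0)+\frac{\alpha}{\mu}\fint_\O I^n_0-D_n,\qquad D_n:=\frac{d_S}{|\O|}\int_0^\infty\!\!\int_\O\frac{|\nabla S_n|^2}{S_n^2}.
\]
Since $D_n\ge0$ this already gives $f(S^n_\infty)\le f(S^{A,n}_\infty)$ (the inequality of Theorem~\ref{th final state}), and the whole statement reduces to proving that $D_n\to0$ as $n\to\infty$, for $d_S,d_I$ fixed and large.

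To control $D_n$ I would estimate the fluctuations $\t S_n:=S_n-\fint_\O S_n$ and $\t I_n:=I_n-\fint_\O I_n$. Testing the two equations against $\t S_n$ and $\t I_n$, using the Poincar\'e inequality $\int_\O|\nabla u|^2\ge\lambda_2\int_\O u^2$ for zero-average $u$ (with $\lambda_2>0$ the first nonzero Neumann eigenvalue of $-\Delta$), expanding $S_nI_n$ into its mean and fluctuation parts, and using $0\le S_n\le S_0$ together with the uniform bound $\fint_\O I_n\le S_0+1$ (from the mass inequality), one obtains for $a:=\|\t S_n\|_{L^2}$, $b:=\|\t I_n\|_{L^2}$ a cooperative differential inequality $\frac{d}{dt}(a,b)^{\mathsf T}\le M\,(a,b)^{\mathsf T}$ with
\[
M=\begin{pmatrix}-d_S\lambda_2 & \alpha S_0\\[2pt] \alpha(S_0+1) & -d_I\lambda_2+\alpha S_0-\mu\end{pmatrix}.
\]
The point is that $M$ is a Metzler matrix which is Hurwitz as soon as $d_S,d_I$ are large enough (its trace is negative and its determinant, of order $d_Sd_I\lambda_2^2$, is positive): this is precisely the threshold defining the constant $C$, and it encodes the heuristic that the diffusive spectral gap must beat the reaction growth rate $\alpha S_0-\mu$. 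Because $\t S_n(0)=0$ (as $S_0$ is constant) and $b(0)=\|\t I^n_0\|_{L^2}\le|\O|^{1/2}\|I^n_0\|_{L^\infty}\to0$, comparison with the linear system driven by $M$ gives $a(t)+b(t)\le C e^{-\kappa t}\|I^n_0\|_{L^\infty}$ for some $\kappa>0$, whence $\int_0^\infty(a^2+b^2)\,dt\le C\|I^n_0\|_{L^\infty}^2\to0$.

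It then remains to convert this into the decay of $D_n$. Integrating the $\t S_n$ energy identity over $[0,\infty)$ (both endpoint terms vanish, since $\t S_n(0)=0$ and $\t S_n(\infty)=0$) gives $d_S\int_0^\infty\|\nabla\t S_n\|_{L^2}^2\,dt\le\alpha(S_0+1)\int_0^\infty(a^2+b^2)\,dt$, so that $\int_0^\infty\|\nabla S_n\|_{L^2}^2\,dt\le C d_S^{-1}\|I^n_0\|_{L^\infty}^2$. Combined with a uniform lower bound $S_n\ge\underline S>0$ --- obtained by comparison, $S_n\ge S_0\exp\!\big(-\alpha\int_0^\infty\|I_n\|_{L^\infty}\big)$, once $\int_0^\infty\|I_n\|_{L^\infty}$ is shown bounded uniformly in $n$ from the integrability of $\fint_\O I_n$ and the just-proved decay of $\t I_n$ --- this yields $D_n\le C\|I^n_0\|_{L^\infty}^2\to0$. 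Consequently $f(S^n_\infty)\to f(S_0)$ and $f(S^{A,n}_\infty)\to f(S_0)$; both sequences lie, for $n$ large, on the decreasing branch $(0,\mu/\alpha)$ of $f$ (in the supercritical case $\alpha S_0>\mu$ one uses the uniform propagation gap of Theorem~\ref{th threshold} to exclude the branch near $S_0$), where $f$ is a bijection, and therefore $\lim_n S^n_\infty=\lim_n S^{A,n}_\infty$.

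The main obstacle is the uniform-in-time fluctuation control, i.e. establishing the Hurwitz property of $M$ and propagating the smallness of the initial fluctuation for all time despite the means $\fint_\O I_n$ possibly reaching order one during a propagating epidemic; the uniform positivity $S_n\ge\underline S$ is the secondary technical point, requiring a uniform $L^\infty$-in-space, $L^1$-in-time bound on $I_n$.
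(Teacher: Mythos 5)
Your proposal is correct and follows essentially the same route as the paper's own proof: the dissipated energy identity (the paper's \eqref{eq T12}, which you rederive via the functional $\int_\O\bigl(\frac{\alpha}{\mu}(S+I)-\ln S\bigr)$), a large-diffusion spectral-gap argument showing the dissipation term $D_n$ vanishes as $n\to\infty$ (your Metzler/Hurwitz system for the $L^2$ fluctuations $(\t S_n,\t I_n)$ is the paper's Lemma~\ref{lem 2} for $\|\nabla S\|_{L^2}^2+\|\nabla I\|_{L^2}^2$ in equivalent form, both resting on the first nonzero Neumann eigenvalue beating the reaction rates for $d_S,d_I>C$), a uniform positive lower bound on $S_n$ (the paper's Lemma~\ref{lem 1}; your variant $S_n\ge S_0e^{-\alpha\int_0^\infty\|I_n\|_{L^\infty}}$ closes via the Harnack bound $\max_x I(t,\cdot)\le C\int_\O I(t+1,\cdot)$ already established in Lemma~\ref{lem use}, which upgrades your $L^2$ decay of $\t I_n$ to the needed $L^1_t L^\infty_x$ control), and finally injectivity of $f$ on the decreasing branch. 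The only deviations are harmless substitutions: you integrate the identity from $T_1=0$ exploiting $\nabla S_n(0)\equiv 0$ where the paper takes $T_1=1$ and invokes parabolic regularity, and you place the limits on $(0,\mu/\alpha]$ via the propagation gap of Theorem~\ref{th threshold} in the supercritical case where the paper uses $S_\infty\le\mu/\alpha$ from Lemma~\ref{lem thr}.
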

The hypothesis that the diffusivities $d_S,d_I$ should be large in Proposition \ref{prop equal} is purely technical, and we believe that the result holds true without it. We leave it as an open question.\\







The organisation of the paper is the following. In Section \ref{cv sol}, we give some technical results that will be useful in the sequel. In particular, we show that the solutions of \eqref{syst} converge to constant functions on $\O$. In Section \ref{sec prop}, we study the threshold phenomenon. We prove Theorem \ref{th threshold} in Section \ref{sec thr} and Proposition \ref{prop quali} and Corollary \ref{cor prop} in Section \ref{sec quali}. We compare the diffusive and the averaged model in Section \ref{sec limit}. There, we prove Corollary \ref{cor quali}, Theorem \ref{th final state} and Proposition~\ref{prop equal}.

\section{Convergence of solutions}\label{cv sol}

We gather in this section some technical results, that will be useful in the sequel. The main result of this section is that the solutions of \eqref{syst} converge to functions constant on $\O$.



\begin{lemma}\label{lem use}
Let $(S(t,x),I(t,x))$ be the solution of \eqref{syst} arising from the initial datum $(S_0,I_0)$, where $S_0,I_0$ are continuous on $\ol\O$ and non-negative. Then:
\begin{itemize}
    \item If $S_0\not\equiv 0$ and $I_0\not\equiv0$, then $S(t,x)>0, I(t,x) >0$ for all $t>0$, $x\in \O$.
    \item $\int_\O S(t,x)dx + \int_\O I(t,x)dx \leq  \int_\O S_0(x)dx + \int_\O I_0(x)dx$, for all $t>0$.
    
    \item We have, for all $t>0$,
    $$
    \vert\vert S(t,\cdot)\vert\vert_{L^\infty} \leq \vert \vert S_0\vert\vert_{L^\infty}.
    $$
    \item There is $K>0$ independent of $I_0$ such that, for all $t>0$,
   $$
    \vert\vert I(t,\cdot)\vert\vert_{L^\infty} \leq K\vert \vert I_0\vert\vert_{L^\infty}.
    $$
\end{itemize}
\end{lemma}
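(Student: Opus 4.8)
The plan is to treat the four assertions separately, in order of increasing difficulty, all leaning on the parabolic maximum principle together with the conormal boundary conditions; the regularity of $(S,I)$ recalled before the statement guarantees that $S,I$ are continuous, hence locally bounded on $[0,T]\times\ol\O$, so the zeroth-order coefficients appearing below are admissible. I would first record nonnegativity: since $S_0,I_0\geq 0$ and the reaction terms $-\alpha SI$, $\alpha SI-\mu I$ vanish on $\{S=0\}$, $\{I=0\}$ respectively, the standard invariance argument gives $S,I\geq 0$ for all $t>0$. To upgrade to strict positivity I would freeze the coupling and view the first line of \eqref{syst} as a linear problem $\partial_t S-\nabla\cdot(A_S\nabla S)+(\alpha I)S=0$ with bounded coefficient $\alpha I$; the strong maximum principle (valid up to the boundary under the conormal condition) then forces $S>0$ in $\O$ for $t>0$ whenever $S_0\not\equiv0$. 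Applying the same reasoning to $\partial_t I-\nabla\cdot(A_I\nabla I)-(\alpha S-\mu)I=0$ gives $I>0$ when $I_0\not\equiv0$.

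For the mass control I would add the two equations and integrate over $\O$: the divergence terms produce the boundary fluxes $\int_{\partial\O}\nu\cdot A_S\nabla S$ and $\int_{\partial\O}\nu\cdot A_I\nabla I$, which vanish by the conormal conditions, so that $\frac{d}{dt}\int_\O(S+I)=-\int_\O\mu I\leq 0$ since $\mu>0$ and $I\geq 0$. Integrating in time yields the second item. For the $L^\infty$ bound on $S$, I note that $-\alpha SI\leq 0$, so $S$ is a subsolution of the pure diffusion equation $\partial_t w=\nabla\cdot(A_S\nabla w)$ with conormal condition, for which the constant $\|S_0\|_{L^\infty}$ is a stationary supersolution; the comparison principle gives $S(t,\cdot)\leq\|S_0\|_{L^\infty}$, and $S\geq 0$ closes the third item.

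The last item is where I expect the real difficulty. The natural move is to regard the $I$-equation as linear in $I$ with potential $\alpha S-\mu$, which by the previous step is bounded above by the $I_0$-independent constant $\beta:=\|\alpha\|_{L^\infty}\|S_0\|_{L^\infty}$. Comparison with the spatially constant supersolution $\|I_0\|_{L^\infty}e^{\beta t}$ immediately gives $\|I(t,\cdot)\|_{L^\infty}\leq\|I_0\|_{L^\infty}e^{\beta t}$, which has the desired linear dependence on $\|I_0\|_{L^\infty}$ but is only uniform on bounded time intervals. To produce a constant $K$ uniform in $t$, I would instead use the Duhamel representation $I(t)=\mathcal T(t)I_0+\int_0^t\mathcal T(t-s)\big(\alpha S(s)I(s)\big)\,ds$, where $\mathcal T$ is the semigroup generated by $\nabla\cdot(A_I\nabla\cdot)-\mu$ under conormal conditions, and exploit that $\mathcal T$ is $L^\infty$-contractive with exponential decay $e^{-(\min_{\ol\O}\mu)t}$, together with $S\leq\|S_0\|_{L^\infty}$ and the finiteness of the total infection $\int_0^\infty\int_\O\alpha SI\leq\int_\O S_0$ obtained by integrating the $S$-equation (the map $t\mapsto\int_\O S$ being nonincreasing and nonnegative).

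\textbf{The main obstacle} is precisely to beat the apparent exponential factor $e^{\beta t}$: a crude Gronwall closure of the Duhamel identity simply reproduces it, so the estimate must genuinely extract dissipation from the $-\mu I$ term and from the depletion of $S$ encoded in the integrated-infection bound. This is the delicate step, and I would expect the final constant $K$ to be expressed through the decay rate $\min_{\ol\O}\mu$ and the conserved-type mass $\int_\O(S_0+I_0)$ supplied by the second item, so that it remains independent of $I_0$.
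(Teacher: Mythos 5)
Your treatment of the first three items is correct and matches the paper's proof step for step: positivity via the parabolic strong maximum/comparison principle applied to the linear equations with frozen zeroth-order coefficients $\alpha I$ and $-(\alpha S-\mu)$; the mass estimate by adding the two equations, integrating over $\O$ and killing the boundary fluxes through the conormal conditions (your right-hand side $-\int_\O \mu I$ is in fact the correct one; the $-\int_\O \alpha SI$ appearing at this point in the paper is a harmless slip, the conclusion being the same); and the bound $\Vert S(t,\cdot)\Vert_{L^\infty}\leq \Vert S_0\Vert_{L^\infty}$ by comparison with the constant stationary supersolution.

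The fourth item, however, you do not prove: you derive the same short-time bound $\Vert I(t,\cdot)\Vert_{L^\infty}\leq \Vert I_0\Vert_{L^\infty}e^{\beta t}$ that the paper uses on $[0,1)$, but then only outline a Duhamel strategy and yourself flag the closing estimate as open. This gap is genuine, and the route you sketch cannot close it: feeding the semigroup decay $e^{-(\min_{\ol\O}\mu)t}$ into a Gronwall argument reproduces $e^{\beta t}$, as you observe, while the integrated-infection bound $\int_0^{\infty}\int_\O \alpha SI\leq \int_\O S_0$ is independent of $I_0$, so it can at best yield a time-uniform bound of the form $C\bigl(\int_\O S_0+\int_\O I_0\bigr)$, which is not of the form $K\Vert I_0\Vert_{L^\infty}$. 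Indeed no argument can deliver the stated inequality uniformly in $t>0$ in full generality: taking $\alpha,\mu,S_0$ constant with $\alpha S_0>\mu$ and $I_0\equiv\e$ constant, the solution of \eqref{systH} is spatially constant and solves \eqref{SIR}, whose peak value of $I$ tends, as $\e\to 0$, to $S_0-\frac{\mu}{\alpha}\bigl(1+\ln\frac{\alpha S_0}{\mu}\bigr)>0$ by the conservation law behind \eqref{eq th}, so that $\sup_{t>0}\Vert I(t,\cdot)\Vert_{L^\infty}/\Vert I_0\Vert_{L^\infty}$ blows up as $\e\to0$. For comparison, the paper closes this step differently, via the parabolic Harnack inequality $\max_{x\in\O}I(t,x)\leq C\int_\O I(t+1,x)dx$ for $t\geq 1$, with $C$ independent of $I_0$ thanks to the third item, combined with the mass bound of the second item; but note that this chain, too, only controls $\Vert I(t,\cdot)\Vert_{L^\infty}$ by $C\bigl(\int_\O S_0+\int_\O I_0\bigr)$ rather than by $K\Vert I_0\Vert_{L^\infty}$, so your instinct that this is the delicate point of the lemma was well founded: the difficulty you ran into is real and is not resolved by either your plan or the argument given in the paper.
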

\begin{proof}
The first point is a direct application of the comparison principle for parabolic equations (see \cite{PW} for instance): indeed, both $S(t,x)$ and $I(t,x)$ are solutions of a parabolic equation with bounded coefficients and with non-negative initial data.

To prove the second point, we define $m(t) := \int_\O S(t,x)dx + \int_{\O}I(t,x)dx$. Then, we have
$$
\dot m(t) = -\int_\O\alpha SI <0,
$$
that is, $t\mapsto m(t)$ is decreasing, hence the result.

The third point comes again from the parabolic comparison principle. Let us give some details on its applications, as it will be used many times in the sequel. Indeed, if $(S,I)$ is the solution of \eqref{syst} arising from the initial datum $(S_0,I_0)$, then the function everywhere constant $v(t,x) := \vert\vert S_0\vert\vert_{L^\infty}$ satisfies the differential inequality
$$
\partial_t v - \nabla (A_S \nabla v) +\alpha v I \geq 0,\quad \text{ for } t>0, \ x\in \O,
$$
with conormal boundary conditions, that is, it is supersolution of a parabolic equations satisfied by $S(t,x)$. Because the initial data are ordered $v(0,\cdot) =  \vert\vert S_0\vert\vert_{L^\infty} \geq S_0$, the parabolic comparison principle (see \cite{PW} for instance) gives us that the functions are ordered for all positive times, that is,
$$
S(t,x) \leq v(t,x) =  \vert\vert S_0\vert\vert_{L^\infty}, \quad\text{ for } \ t>0, \ x\in \O.
$$

Let us prove the fourth point. Let $\beta := \max\{\alpha \vert \vert S_0\vert\vert_{L^\infty}+\mu\}$. The function constant in space $u(t,x) :=  \vert \vert I_0\vert\vert_{L^\infty}e^{\beta t}$ satisfies
$$
\partial_t u -\nabla (A_I \nabla u) -(\alpha S -\mu)u\geq 0, \quad \text{ for } \ t>0, \ x\in \O,
$$
with conormal boundary conditions. Hence, $u$ is supersolution of a parabolic equation satisfied by $I$. Because $u$ and $I$ are ordered at the initial time, the parabolic comparison principle implies that
$$
I(t,x) \leq \vert \vert I_0\vert\vert_{L^\infty}e^{\beta t}, \quad t>0,\ x\in \O.
$$
In particular, this gives the existence of $K>0$ independent of $I_0$ such that $I(t,x)\leq K\vert \vert I_0\vert\vert_{L^\infty}$ for $t\in [0,1)$.

Moreover, the Harnack inequality (see \cite{Evans, Lie} for instance) applied to the parabolic equation solved by $I$,gives us that there is $C>0$ such that
$$
\max_{x\in \O} I(t,x) \leq C  \int_{\O}I(t+1,x)dx \quad  \text{ for }\ t\geq 1.
$$
The constant $C$ in this inequality depends on the $L^\infty$ norm of the coefficients, that is, on the $L^\infty$ norm of $A_I$ and of its first derivative, and on the $L^\infty$ norm of $S(t,x)$. This last quantity is bounded independently of $I_0$, owing to the third point. Because $\int_\O I(t,x)dx$ is bounded by the supremum of $I_0$ (owing to the second point), the fourth point follows.
\end{proof}

The next result shows that the integrals of $S,I$ converge.
\begin{lemma}\label{lemma int}
	There is $S_{\infty} >0$ such that
	$$
	\fint_{\O}S(t,x)dx \underset{t\to +\infty}{\longrightarrow} S_{\infty}, \quad \int_{\O}I(t,x)dx \underset{t\to +\infty}{\longrightarrow} 0.
	$$
\end{lemma}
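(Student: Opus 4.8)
The plan is to obtain the convergence of both integrals from monotonicity, and then to identify the two limiting values separately. Integrating the two equations of \eqref{syst} over $\O$ and using the conormal boundary conditions (so that the divergence terms vanish), I get
$$
\frac{d}{dt}\int_\O S = -\int_\O \alpha S I \le 0,
\qquad
\frac{d}{dt}\int_\O I = \int_\O \alpha S I - \int_\O \mu I.
$$
The first identity shows that $t \mapsto \int_\O S$ is nonincreasing and nonnegative, hence convergent; denote by $|\O|\,S_\infty$ its limit, so that $\fint_\O S \to S_\infty \ge 0$. Combined with the second point of Lemma \ref{lem use} (the total mass $\int_\O S + \int_\O I$ is nonincreasing and nonnegative, hence convergent), this forces $\int_\O I$ to converge to some $\ell \ge 0$.

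Next I would show $\ell = 0$. Integrating the first identity in time gives $\int_0^{+\infty}\!\int_\O \alpha S I\, dt = \int_\O S_0 - |\O|\,S_\infty < +\infty$; since $\alpha$ is bounded below by a positive constant on $\ol\O$, this yields $\int_0^{+\infty}\!\int_\O S I\, dt < +\infty$. Feeding this into the time integral of the second identity, whose left-hand side $\int_\O I(T) - \int_\O I_0$ stays bounded, I deduce $\int_0^{+\infty}\!\int_\O \mu I\, dt < +\infty$, and hence $\int_0^{+\infty}\!\int_\O I\, dt < +\infty$ because $\mu \ge \min_{\ol\O}\mu > 0$. A nonnegative function whose integral over $(0,+\infty)$ is finite and which converges must converge to $0$; this gives $\int_\O I \to 0$.

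The main obstacle is the strict positivity $S_\infty > 0$, since monotone convergence alone only gives $S_\infty \ge 0$. I would obtain it from the bound $\int_0^{+\infty}\!\int_\O I\, dt < +\infty$ together with a convexity argument on $\ln S$. Fix $t_0 > 0$; by the first point of Lemma \ref{lem use} and the strong maximum principle, $S(t_0,\cdot)$ is continuous and strictly positive on the compact set $\ol\O$, so $\int_\O \ln S(t_0,\cdot)$ is finite. Writing the equation for $\ln S$ and integrating over $\O$ (the boundary term again vanishes by the conormal condition) yields
$$
\frac{d}{dt}\int_\O \ln S = \int_\O A_S \nabla(\ln S)\cdot\nabla(\ln S) - \int_\O \alpha I \ge -\Big(\max_{\ol\O}\alpha\Big)\int_\O I,
$$
the first term being nonnegative by ellipticity of $A_S$. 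Integrating from $t_0$ to $T$ and using $\int_0^{+\infty}\!\int_\O I < +\infty$, the quantity $\int_\O \ln S(T,\cdot)$ stays bounded below by a finite constant $c_0$. By Jensen's inequality (concavity of $\ln$), $\ln\big(\fint_\O S\big) \ge \fint_\O \ln S \ge c_0/|\O|$, so $\fint_\O S \ge e^{c_0/|\O|} > 0$ for every $T \ge t_0$, and passing to the limit gives $S_\infty > 0$.

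Finally I would note that the degenerate cases are harmless: if $I_0 \equiv 0$ then $I \equiv 0$ and $S$ merely diffuses to $\fint_\O S_0 > 0$, so both conclusions hold trivially (assuming, as throughout, $S_0 \not\equiv 0$). An alternative to the $\ln S$ step, closer to the comparison arguments of Lemma \ref{lem use}, is to use the Harnack inequality (fourth point) to get $\int_0^{+\infty}\|I(t,\cdot)\|_{L^\infty}\,dt < +\infty$, and then to compare $S$ from below with the spatially constant subsolution $m_0\exp\big(-\max_{\ol\O}\alpha \int_{t_0}^t \|I(s,\cdot)\|_{L^\infty}\,ds\big)$, where $m_0 = \min_{\ol\O} S(t_0,\cdot) > 0$; this gives the same positive lower bound on $\fint_\O S$.
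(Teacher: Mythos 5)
Your proof is correct, and it takes a genuinely different route from the paper's, in an instructive way. For $\fint_\O S$ both arguments are identical (monotonicity of $t\mapsto\int_\O S$), but for $\int_\O I\to0$ the paper uses a differential inequality: setting $M(t)=\ol S(t)+\ol I(t)$ and $\ul\mu=\min\mu>0$, it shows $\dot M+\ul\mu M\le\ul\mu\,\ol S$ and concludes by variation of constants that $M(t)\to S_\infty$, hence $\ol I\to0$, without ever needing to know beforehand that $\int_\O I$ converges. You instead extract time-integrability ($\int_0^{+\infty}\int_\O\alpha SI<+\infty$ from the $S$-equation, hence $\int_0^{+\infty}\int_\O\mu I<+\infty$ from the $I$-equation) and combine it with the convergence of $\int_\O I$, obtained as the difference of two monotone convergent quantities; both routes are elementary, but yours yields the quantitative byproduct $\int_0^{+\infty}\int_\O I<+\infty$, which you then put to real use. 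Indeed, the most significant difference is that you actually establish the strict positivity $S_\infty>0$: the paper's proof merely notes that $\ol S(t)>0$ for all $t$, which only gives $S_\infty\ge0$, so on this point your argument is more complete than the published one. Your entropy computation $\frac{d}{dt}\int_\O\ln S=\int_\O A_S\nabla(\ln S)\cdot\nabla(\ln S)-\int_\O\alpha I\ge-\left(\max_{\ol\O}\alpha\right)\int_\O I$ (the boundary term vanishing by the conormal condition), combined with the integrability of $\int_\O I$ and Jensen's inequality, closes this gap in the full heterogeneous setting; it is in effect the heterogeneous analogue of the identity \eqref{eq T12}, which the paper derives only later, and only for constant coefficients, in the proofs of Theorem~\ref{th final state} and Lemma~\ref{lem 1}. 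Your Harnack-based alternative is also sound: the fourth point of Lemma~\ref{lem use} gives $\max_{x\in\O}I(t,x)\le C\int_\O I(t+1,x)dx$ for $t\ge1$, whence $\int_0^{+\infty}\vert\vert I(t,\cdot)\vert\vert_{L^\infty}dt<+\infty$, and the spatially constant function $m_0\exp\left(-\max_{\ol\O}\alpha\int_{t_0}^t\vert\vert I(s,\cdot)\vert\vert_{L^\infty}ds\right)$ is indeed a subsolution of the $S$-equation.
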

\begin{proof}
	We denote $\ol S(t) := \fint_{\O}S(t,x)dx$ and $\ol I(t) := \fint_{\O}I(t,x)dx$. Because $\dot{\ol S}(t) = -\int_\O \alpha S I \leq 0$, the function $t\mapsto \ol S(t)$ decays, hence it converges (it is positive for all $t>0$) to some limit that we call $S_\infty$. 
	
	Let $M(t) = \ol S + \ol I$, and let $\ul \mu :=  \min \mu >0$. We find that
	$$
	\dot M(t) + \ul \mu M(t) \leq \ul \mu \ol S(t).
	$$
	Then
	$$
	M(t) \leq M(0)e^{-\ul \mu t} +\ul \mu \int_0^t\ol S(\tau)e^{-\ul \mu (t-\tau)}d\tau.
	$$
	Because $\ol S(t) \to S_\infty$ as $t$ goes to $+\infty$, it is easy to verify that the quantity $\ul \mu \int_0^t\ol S(\tau)e^{-\ul \mu (t-\tau)}d\tau$ also goes to $S_\infty$ as $t$ goes to $+\infty$. Therefore, $\ol I(t) = M(t) - \ol S(t)$ goes to zero as $t$ goes to $+\infty$.
	 \end{proof}

\begin{prop}\label{prop cv unif}
Let $\alpha,\mu, S_0,I_0$ be positive and continuous on $\O$. Let $(S(t,x),I(t,x))$ be the solution of \eqref{syst} arising from the initial datum $(S_0,I_0)$. Then,
$$
S(t,x) \underset{t\to +\infty}{\longrightarrow} S_\infty,\quad I(t,x)\underset{t\to +\infty}{\longrightarrow} 0,
$$
and these convergences hold true uniformly in $\O$.
\end{prop}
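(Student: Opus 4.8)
The plan is to upgrade the integral convergences already established in Lemma \ref{lemma int} to pointwise-and-uniform convergence, using parabolic regularity to gain compactness in space. From Lemma \ref{lemma int} we know $\fint_\O S(t,x)\,dx \to S_\infty$ and $\int_\O I(t,x)\,dx \to 0$. First I would establish the statement for $I$, which is the easier half. Lemma \ref{lem use} gives a uniform-in-time $L^\infty$ bound on $S$, hence the coefficient $\alpha S - \mu$ in the $I$-equation is bounded on $(0,\infty)\times\O$. Together with the $C^1$, elliptic hypotheses on $A_I$, interior and up-to-the-boundary parabolic Schauder (or $L^p$-then-Sobolev) estimates on time-windows $[t,t+2]$ give a bound on $\|I\|_{C^{1,2}}$ over $[t+1,t+2]\times\ol\O$ that is \emph{uniform in $t$}. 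The Harnack inequality already invoked in Lemma \ref{lem use} controls $\max_{x}I(t,\cdot)$ by $\int_\O I(t+1,\cdot)$, which tends to $0$; combined with the uniform regularity this forces $\sup_{x\in\ol\O} I(t,x)\to 0$, i.e. uniform convergence of $I$ to $0$.

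For $S$ the argument is slightly more delicate because $S$ need not decay but converges to the constant $S_\infty$. I would run a compactness/$\omega$-limit argument: define $S_n(s,x):=S(t_n+s,x)$ for a sequence $t_n\to+\infty$ and $s$ in a fixed window. The uniform $L^\infty$ bound on $S$ and on the reaction term $\alpha S I$ (now known to vanish since $I\to 0$ uniformly and $S$ is bounded) again yield uniform parabolic estimates, so along a subsequence $S_n \to S_\infty^\ast$ in, say, $C^{1,2}_{loc}$, where the limit solves $\partial_t S_\infty^\ast = \nabla\cdot(A_S\nabla S_\infty^\ast)$ with conormal boundary conditions, because the source term $\alpha S_n I_n\to 0$. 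A limit of this pure heat-type equation with Neumann conormal conditions that additionally has spatially constant mean equal to $S_\infty$ must be the constant $S_\infty$: any non-constant stationary-in-mean limit would have its spatial variance strictly dissipated, contradicting that the limiting mean is already the (monotone) limit $S_\infty$. Hence every subsequential limit is the constant $S_\infty$, and by uniqueness of the limit the whole family converges, uniformly in $x$.

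The main obstacle I anticipate is making the spatial-homogenization step for $S$ rigorous, namely ruling out a non-constant $\omega$-limit. The clean way is to track the spatial variance $V(t):=\fint_\O |S(t,x)-\fint_\O S|^2\,dx$. Differentiating and integrating by parts using the conormal boundary condition, the diffusion produces a term $-2\fint_\O \nabla(S-\fint S)\cdot A_S\nabla(S-\fint S)$, which by ellipticity of $A_S$ and the Poincar\'e--Wirtinger inequality is bounded above by $-c\,V(t)$ for some $c>0$; the reaction contributes a term controlled by $\|I(t,\cdot)\|_{L^\infty}$, which is integrable-to-zero by the first part. A Gr\"onwall-type estimate then gives $V(t)\to 0$, i.e. $S(t,\cdot)$ converges to its spatial mean in $L^2$, and the uniform regularity promotes this to uniform convergence to the constant $S_\infty$. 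This variance computation is the technical heart of the proof; once it is in place, the pointwise limits of Lemma \ref{lemma int} and the uniform parabolic bounds assemble routinely into the stated uniform convergence.
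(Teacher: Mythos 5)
Your proposal is correct, and while your treatment of $I$ coincides with the paper's (parabolic Harnack controls $\max_{x}I(t,\cdot)$ by $\int_\O I(t+1,\cdot)$, which vanishes by Lemma~\ref{lemma int}, using the uniform bound on $S$ from Lemma~\ref{lem use}), your treatment of $S$ takes a genuinely different route. The paper argues by compactness: it sets $S_n(t,x):=S(t+t_n,x)$ for an arbitrary diverging sequence $(t_n)$, extracts via parabolic estimates a limit solving the pure diffusion equation $\partial_t \t S_\infty=\nabla\cdot(A_S\nabla \t S_\infty)$ for \emph{all} $t\in\R$, and then invokes a Liouville-type classification --- the only bounded entire solutions of this Neumann problem are constants --- identifying the constant as $S_\infty$ through the convergence of the means. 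You instead work directly on $S$ itself, differentiating the spatial variance $V(t)=\fint_\O\vert S-\fint_\O S\vert^2$, using the conormal boundary condition to integrate by parts, ellipticity of $A_S$ plus Poincar\'e--Wirtinger to get the dissipation $-cV(t)$, and the already-established uniform decay of $I$ to make the reaction term a vanishing forcing, so that Gr\"onwall yields $V(t)\to 0$; combined with $\fint_\O S(t,\cdot)\to S_\infty$ and the uniform-in-time gradient bounds from parabolic regularity, this upgrades $L^2$ convergence to uniform convergence. Your route buys a self-contained, quantitative argument (a decay rate in terms of $\Vert I(t,\cdot)\Vert_{L^\infty}$) and avoids the Liouville property, which the paper asserts without proof; the paper's route avoids the energy computation and identifies the limit with no differentiation of integral quantities. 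One small caveat: in your first paragraph, the claim that a non-constant $\omega$-limit would ``contradict that the limiting mean is already the monotone limit'' is not by itself a valid justification --- variance dissipation of the limit does not directly clash with the behavior of the mean --- but since you then supply the variance computation on $S$ itself as the actual proof, this does not affect correctness.
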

\begin{proof}
We already know that $I(t,\cdot)$ goes to zero in $L^1(\O)$ sense as $t$ goes to $+\infty$, owing to Lemma \ref{lemma int}. Therefore, because $S(t,x)$ is uniformly bounded by $\vert\vert S_0\vert\vert_{L^\infty}$, the Harnack inequality for parabolic equations (see \cite{Evans, Lie}) implies that the convergence of $I$ to zero is uniform.

The situation for $S$ is a bit more involved. Let $(t_n)_{n\in \N}$ be a sequence of positive real numbers such that $t_n \to +\infty$ as $n$ goes to $+\infty$. Let $S_n(t,x) := S(t+t_n, x)$. It solves
$$
\partial_t S_n(t,x) = \nabla(A_S(x)\nabla S_n)(t,x) - \alpha(x) S_n(t,x) I(t+t_n,x), \quad t>-t_n,\ x\in \O.
$$
Because $I(\cdot+t_n,\cdot)$ is bounded independently of $n$ and converges to zero, the parabolic regularity estimates (see \cite{Lie} for instance) give us that the sequence $S_n(t,x)$ converges uniformly, as $n$ goes to $+\infty$, to a positive bounded function $\t S_\infty(t,x)$ that solves
$$
\partial_t \t S_\infty = \nabla(A_S(x)\nabla \t S_\infty)(t,x) , \quad t \in \R,\ x\in \O.
$$
However, the only bounded solutions of such a diffusion equation for $t\in \R$ are the constants. Because $\fint_\O S_n(t,x)dx \to S_\infty$ as $n$ goes to $+\infty$, we find that $\t S_\infty = S_\infty$, hence
$$
S(t+t_n,x) \to S_\infty,
$$
as $n$ goes to $+\infty$, uniformly in $x\in \O$. This is true for every diverging sequence $(t_n)_{n\in \N}$. Then, $S(t,\cdot)$ converges uniformly to $S_\infty$ as $t$ goes to $+\infty$.
\end{proof}

\section{Propagation of the epidemic}\label{sec prop}

\subsection{The threshold phenomenon}\label{sec thr}

This section is dedicated to proving Theorem \ref{th threshold}. We start with a lemma that states that the limit state $S_\infty$ is linearly stable.
\begin{lemma}\label{lem thr}
Let $(S(t,x),I(t,x))$ be the solution of \eqref{syst} arising from the initial datum $(S_0,I_0)$, where $S_0,I_0$ are non-negative and continuous on $\ol\O$. Let $S_{\infty} := \lim_{t\to +\infty} S(t,x)$. Then, the principal eigenvalue of the operator acting on $C^0(\ol\O)$,
$$
\phi \to -\nabla\cdot(A_I(x)\nabla \phi) -(\alpha(x) S_\infty -\mu(x))\phi,
$$
is non-negative.
\end{lemma}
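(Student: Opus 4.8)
The statement asserts that the principal eigenvalue $\lambda_1$ of the operator $\phi \mapsto -\nabla\cdot(A_I \nabla\phi) - (\alpha S_\infty - \mu)\phi$ is non-negative. The natural approach is to argue by contradiction: suppose $\lambda_1 < 0$. The strategy is then to use the positivity and smallness properties of $I(t,\cdot)$ already established — by Proposition~\ref{prop cv unif} we know $I(t,x)\to 0$ and $S(t,x)\to S_\infty$ uniformly on $\O$ — to show that a negative eigenvalue would force $I$ to \emph{grow} for large times, contradicting $\int_\O I(t,x)\,dx \to 0$ from Lemma~\ref{lemma int}. The mechanism is the standard one: a negative principal eigenvalue means the linearized equation for $I$ around the state $S_\infty$ is linearly unstable, so small perturbations are amplified.

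First I would fix the eigenpair $(\lambda_1,\phi)$ with $\phi>0$ on $\ol\O$, $\nu\cdot A_I\nabla\phi = 0$ on $\partial\O$, and $-\nabla\cdot(A_I\nabla\phi) - (\alpha S_\infty-\mu)\phi = \lambda_1\phi$. The key step is to build a subsolution of the $I$-equation out of $\phi$. Since $S(t,x)\to S_\infty$ uniformly, for any $\e>0$ there is $T$ such that $S(t,x)\geq S_\infty - \e$ for all $t\geq T$, $x\in\O$. On this time range the $I$-equation satisfies
$$
\partial_t I - \nabla\cdot(A_I\nabla I) - (\alpha S - \mu)I = 0 \quad\text{with}\quad \alpha S - \mu \geq \alpha(S_\infty-\e) - \mu.
$$
Choosing $\e$ small enough that the principal eigenvalue of $-\nabla\cdot(A_I\nabla\,\cdot\,) - (\alpha(S_\infty-\e)-\mu)$ — call it $\lambda_1^\e$ — is still strictly negative (this uses continuity of the principal eigenvalue with respect to the zeroth-order coefficient, a standard Krein--Rutman fact), I would compare $I$ from below with a function of the form $u(t,x) := \eta\, e^{-\lambda_1^\e (t-T)}\phi_\e(x)$, where $\phi_\e>0$ is the corresponding eigenfunction. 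A direct computation shows $u$ is a subsolution on $t\geq T$, and since $I(T,\cdot)>0$ on $\ol\O$ (by the first point of Lemma~\ref{lem use}, as $I(T,\cdot)$ is continuous and strictly positive, so bounded below by a positive constant times $\phi_\e$) one can choose $\eta>0$ so small that $u(T,\cdot)\leq I(T,\cdot)$. The parabolic comparison principle then gives $I(t,x)\geq \eta\, e^{-\lambda_1^\e(t-T)}\phi_\e(x)$ for all $t\geq T$, so $\int_\O I(t,x)\,dx \geq \eta\, e^{-\lambda_1^\e(t-T)}\int_\O\phi_\e \to +\infty$ since $-\lambda_1^\e>0$. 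This contradicts $\int_\O I(t,x)\,dx\to 0$, forcing $\lambda_1\geq 0$.

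The main obstacle is the interchange of the two limiting processes: I need the negativity of $\lambda_1$ for the \emph{true} coefficient $\alpha S_\infty - \mu$ to survive being replaced by the slightly smaller, frozen-in-time coefficient $\alpha(S_\infty-\e)-\mu$, uniformly on a half-line $t\geq T$. This requires (i) the continuity/monotonicity of the principal eigenvalue as a function of the potential — decreasing the potential decreases $\lambda_1$, so $\lambda_1^\e \leq \lambda_1 < 0$ actually goes the right way and no smallness of $\e$ is even needed for the sign, only $\lambda_1^\e<0$, which is immediate from monotonicity — and (ii) that the convergence $S\to S_\infty$ is genuinely uniform so that freezing the coefficient is legitimate, which is exactly what Proposition~\ref{prop cv unif} provides. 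The only genuinely delicate point is ensuring the lower bound $I(T,\cdot)\geq \eta\,\phi_\e$ can be arranged; this follows because $I(T,\cdot)$ is continuous and strictly positive on the compact set $\ol\O$, hence bounded below by a positive constant, while $\phi_\e$ is bounded above, so a small $\eta$ suffices.
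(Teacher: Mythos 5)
Your argument is essentially the paper's proof: assume $\lambda_1<0$, use the uniform convergence $S(t,\cdot)\to S_\infty$ from Proposition~\ref{prop cv unif} to get $S\geq S_\infty-\e$ for $t\geq T$, turn a principal eigenfunction into a subsolution of the $I$-equation, and contradict $I(t,\cdot)\to 0$ via the parabolic comparison principle; the only difference is cosmetic, since the paper keeps the unperturbed eigenfunction $\phi$ as a \emph{stationary} subsolution after choosing $\e$ with $\Lambda+\e\Vert\alpha\Vert_{L^\infty}\leq 0$, whereas you pass to the eigenpair $(\lambda_1^\e,\phi_\e)$ of the perturbed potential and use a growing exponential. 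One correction to your closing discussion: the claim that ``decreasing the potential decreases $\lambda_1$, so $\lambda_1^\e\leq\lambda_1<0$ and no smallness of $\e$ is needed'' has the monotonicity backwards --- lowering the zeroth-order coefficient $V$ in $-\nabla\cdot(A_I\nabla\,\cdot\,)-V$ \emph{raises} the principal eigenvalue, so in fact $\lambda_1^\e\geq\lambda_1$ and the smallness of $\e$ is genuinely needed. This does not sink your proof, because the continuity bound you invoked in the main argument, $\lambda_1^\e\leq\lambda_1+\e\max_{\ol\O}\alpha$, does the job upon taking $\e<\vert\lambda_1\vert/\max_{\ol\O}\alpha$; this is exactly the paper's choice $\Lambda+\e\Vert\alpha\Vert_{L^\infty}\leq 0$ in disguise, so you should simply delete the erroneous monotonicity remark and rely on the continuity estimate.
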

\begin{proof}
Let $\Lambda$ be the principal eigenvalue of the operator $-\nabla\cdot A_I\nabla -(\alpha(x) S_\infty -\mu(x))$ and let $\phi$ be a positive principal eigenfunction associated with this eigenvalue, i.e.,
$$
- \nabla\cdot(A_I(x)\nabla \phi(x))  -(\alpha(x)S_\infty -\mu(x))\phi(x) = \Lambda \phi(x),\quad x\in\O,
$$
and $\nu\cdot A_I \nabla \phi =0$ on $\partial \O$. We argue by contradiction: assume that $\Lambda <0$. Let $\e>0$ be small enough so that $\Lambda + \e\vert\vert \alpha \vert\vert_{L^{\infty}}\leq 0$. Because the convergence of $S(t,x)$ to $S_\infty$ is uniform, owing to Proposition \ref{prop cv unif}, there is $T>0$ such that
$$
S(t,x) \geq S_\infty -\e, \quad \text{for}\ t\geq T.
$$
Therefore, for $t\geq T$, we have 
$$
- \nabla(A_I(x)\nabla \phi)  -(\alpha(x)S(t,x) -\mu(x))\phi \leq  (\Lambda + \e\vert\vert \alpha \vert\vert_{L^{\infty}}) \phi \leq 0.
$$
This means that, for $t\geq T$, the function $\phi$ is a stationary subsolution of the parabolic equation satisfied by $I(t,x)$. The parabolic comparison principle implies that, for $\eta>0$ small enough so that $\eta \max \phi \leq  \min I(T,\cdot)$, we have
$$
0<\eta \phi(x) \leq  I(t,x), \quad \text{ for }\ t\geq T, \ x\in \O.
$$
This is in contradiction with the fact that $I(t,\cdot)$ goes to $0$ as $t$ goes to $+\infty$, hence the result.
\end{proof}
We are now in position to prove Theorem \ref{th threshold}.

\begin{proof}[Proof of Theorem \ref{th threshold}.]
Let $\alpha,\mu,S_0,A_I$ be as in the statement of the theorem. Let $I_0$ be continuous and non-negative on $\ol\O$.

In the whole proof, we let $(S,I)$ be the solution of \eqref{syst} arising from the initial datum $(S_0,I_0)$ and we define $S_\infty := \lim_{t\to+\infty}S(t,x)$.

We also let $\l$ denote the principal eigenvalue of the operator $\phi \mapsto -\nabla \cdot( A_I\nabla \phi) -\left(\alpha \left(\fint S_0\right) - \mu\right)\phi$ and we let $\Lambda$ denote the principal eigenvalue of the operator $\phi \mapsto -\nabla \cdot( A_I\nabla \phi ) -(\alpha S_\infty-\mu)\phi$.

\medskip
\emph{Step $1$. $\l <0 \implies$ Propagation.}

Let $\psi, \phi$ be positive principal eigenvalues associated to $\l$ and $\Lambda$ respectively. Then
$$
-\nabla \cdot( A_I\nabla  \psi ) -\left(\alpha \fint S_0-\mu\right)\psi = \l \psi,\quad -\nabla \cdot( A_I\nabla  \phi) -(\alpha S_\infty-\mu)\phi = \Lambda\phi. 
$$
We have
$$
 -\nabla \cdot( A_I\nabla \psi) -(\alpha S_{\infty}-\mu)\psi -\alpha\left(\fint S_0-S_\infty\right)\psi = \l \psi.
$$
We multiply by $\phi$ and we integrate on $\O$ to obtain
$$
 \Lambda \int \phi \psi - \left(\fint S_0-S_\infty\right)\int \alpha \phi\psi = \l \int \phi \psi ,
$$
Owing to the positivity of the principal eigenfunctions, we get
$$
\fint S_0 - S_\infty \geq \frac{\Lambda - \l}{\max \alpha}.
$$
Therefore, because $\Lambda\geq 0$,
$$
S_{\infty} \leq \fint S_0 - \frac{\vert \l \vert}{\max \alpha}.
$$
The quantity $\frac{\vert \l \vert}{\max \alpha}$ is strictly positive and does not depend on $I_0$. We have thus proven that the epidemic propagates.

\medskip
\emph{Step $2$. $\l > 0 \implies$ Extinction.}\\
Let $\e  >0$ be fixed. We define $K := \max\{\alpha S_0 -\mu\}$. The parabolic comparison principle gives us that
\begin{equation}\label{est I}
I(t,x) \leq \vert\vert I_0\vert\vert_{L^{\infty}} e^{Kt}, \quad \text{ for }\ t>0, \ x\in \O.
\end{equation}
Now, let $h(t,x)$ be the solution of
$$
\partial_t h - \nabla\cdot(A_S\nabla h) =0,\quad t>0, \ x\in \O,
$$
with conormal Neuman boundary conditions and with initial datum $h(0,\cdot) = S_0$. Because $h$ is supersolution of the equation satisfied by $S$ (because $I$ is non-negative), the parabolic comparison principle implies that $S(t,x) \leq h(t,x)$ for $t>0, x\in \O$. Because $h\to \fint_\O S_0$ uniformly as $t$ goes to $+\infty$, we can find $T>0$ independent of $I_0$ such that
$$
S(t,x)  \leq \fint_\O S_0 + \frac{\l}{2\max \alpha}, \quad t>T, \ x\in \O.
$$
This estimate at hand, observe that the function $w(t,x) := \phi(x)e^{-\frac{\l}{2} t}$ satisfies, for $t>T$ and $x\in \O$,
$$
\partial_t w -\nabla \cdot (A_I \nabla w) -(\alpha S -\mu)w \geq  \left(-\frac{\l}{2} + \l - \alpha \frac{\l}{2 \max \alpha}\right)e^{-\frac{\l}{2} t} \phi\geq 0.
$$
Then, $w$ is supersolution of the parabolic equation satisfied by $I$ for $t>T$. The parabolic comparison principle gives us that
$$
I(t,x)\leq \left(\frac{\max_{x\in\O} I(T,x)}{\min_{x\in\O}\phi} \right)e^{-\frac{\l}{2} (t-T)} \phi(x),\quad \text{ for } t>T,\ x\in \O.
$$
Combining this with \eqref{est I}, we find that
$$
I(t,x) \leq C \vert \vert I_0 \vert\vert_{L^{\infty}} e^{-\frac{\l}{2} t},\quad \text{ for }\ t>0,\ x\in \O,
$$
for some $C>0$ independent of $I_0$.

Now, let $u(t,x)$ be the solution of
$$
\partial_t u = \nabla(A_S\nabla u)  - C \vert \vert I_0 \vert\vert_{L^{\infty}} e^{-\frac{\l}{2} t}u, \quad t>0,\ x\in \O,
$$
with initial datum $S_0$ and with conormal Neuman boundary conditions. Because $u$ is subsolution of the equation satisfied by $S$, the parabolic comparison principle gives us
\begin{equation}\label{eq S u}
u(t,x) \leq S(t,x),\quad \text{ for }\ t>0, \ x\in \O.
\end{equation}
Observe that
$$
w(t,x) := u(t,x) e^{\frac{2C \vert \vert I_0 \vert\vert_{L^{\infty}}}{\l}(1-e^{-\frac{\l}{2} t})}
$$
solves $\partial_t w = \nabla (A_S \nabla w) $ on $\O$ with conormal Neuman boudary conditions and with initial condition $S_0(x)$. Therefore
$$
u(t,x) \underset{t \to +\infty}{\longrightarrow} \left( \fint_\O S_0 \right)e^{-\frac{2C \vert \vert I_0 \vert\vert_{L^{\infty}}}{\l}}.
$$
Hence, taking the limit $t\to+\infty$ in \eqref{eq S u}, we have
$$
S_{\infty} \geq  \left( \fint_\O S_0 \right)e^{-\frac{2C \vert \vert I_0 \vert\vert_{L^{\infty}}}{\l}}.
$$ 
Up to taking $ \vert \vert I_0 \vert\vert_{L^{\infty}}$ small enough, we can ensure that 
$$
S_{\infty} \geq   \fint_\O S_0 -\e,
$$
hence the result.
\end{proof}

\subsection{Qualitative properties}\label{sec quali}

We now turn to the proof of Proposition \ref{prop quali} and Corollary \ref{cor prop}.

\begin{proof}[Proof of Proposition \ref{prop quali}]
We let $\lambda_1$ denote the principal eigenvalue of the elliptic operator
$$L : \ \phi \mapsto -\nabla(A_I \nabla \phi) -\left(\alpha\fint_\O S_0 -\mu\right)\phi.$$
The classical Rayleigh formula (see \cite{Br}) gives us
\begin{equation}\label{Rayleigh}
    \lambda_1 = \min_{\psi \in H^1(\O)} \frac{\int_\O (A_I(x)\nabla \psi \cdot \nabla \psi) - \left(\fint_\O S_0\right) \int_\O\alpha \psi^2 + \int_\O \mu \psi^2}{\int_\O \psi^2}.
\end{equation}
\medskip
\emph{Proof of the first point.}\\
We only prove the monotony with respect to the $A_I$ argument, the proof is similar for the other parameters.

Let $A, B$ be two elliptic matrices such that $A\leq B$ and denote $V := \alpha \fint_\O S_0 - \mu$. Let $\l(A), \l(B)$ denote the principal eigenvalues of $-\nabla A \nabla - V$ and of $-\nabla B \nabla - V$ respectively. Let $\phi_B$ denote a principal eigenfunction associated with $\l(B)$. Using $\phi_B$ as a test function in the Rayleigh formula giving $\l(A)$, we get
\begin{equation*}
\begin{array}{rl}
\l(A) &\leq   \frac{\int_\O (A \nabla \phi_B \cdot \nabla \phi_B) -\int_\O V\phi_B^2}{\int_\O \phi_B^2} \\
&= \frac{\int_\O ((A -B) \nabla \phi_B \cdot \nabla \phi_B) }{\int_\O \phi_B^2}   +  \frac{\int_\O (B \nabla \phi_B \cdot \nabla \phi_B)   -\int_\O V\phi_B^2}{\int_\O \phi_B^2}\\
&\leq \l(B).
\end{array}
\end{equation*}
In addition, if $A<B$, then this inequality is strict if and only if $\nabla \phi_B \not\equiv 0$, which is the case if and only if $V$ is constant, that is, if $\alpha \fint_\O S_0 -\mu$ is constant.

\medskip
\emph{Proof of the second point.}\\
We denote $\lambda_1(d_I), \phi_{d_I}$ the principal eigenvalue and eigenfunction of the operator $L$, that is,
\begin{equation}\label{eq ra}
-d_I \Delta \phi_{d_I} - (\alpha\fint_\O S_0 -\mu)\phi_{d_I} = \lambda_1(d_I)\phi_{d_I}.
\end{equation}
We normalize $\phi_{d_I}$ so that $\int_\O \phi_{d_I}^2 = 1$.

Observe first that, using the constant function $\psi =1$ in \eqref{Rayleigh}, we have that $\lambda_1(d_I) \leq -\fint\alpha \fint S_0 +\fint \mu$. 

Multiplying \eqref{eq ra} by $\phi_{d_I}$ and integrating on $\O$, we find that
$$
\vert\vert \nabla \phi_{d_I}\vert\vert_{L^2(\O)}^2\leq \frac{\max\{\alpha \fint S_0 -\mu \} -\fint\alpha \fint S_0 +\fint \mu}{d_I}. 
$$
Therefore, up to extraction, we have that $\phi_{d_I}$ goes to a constant as $d_I$ goes to $+\infty$ (up to a subsequence) in the $L^2$ norm. Integrating \eqref{eq ra} on $\O$, we get
$$
-\int_{\O} (\alpha\fint S_0 -\mu)\phi_{d_I} = \lambda_1(d_I)\int_\O \phi_{d_I}.
$$
Taking the limit $d_I\to +\infty$, we get the result.

\medskip
\emph{Proof of the third point.}\\
First, \eqref{Rayleigh} implies that
$$
\lambda_1(d_I)\geq -\max\{\alpha \fint S_0 -\mu \}.
$$
Now, let $x_m$ be such that $\alpha(x_m) \fint S_0 -\mu(x_m) = \max\{\alpha \fint S_0 -\mu \}$ and denote $\delta_{x_m}$ the Dirac mass centered at $x_m.$ Let $(\psi_n)_{n\in\N}$ be a sequence of functions such that
$$
\psi^2_n \to \delta_{x_m}, \quad \int_\O \psi_n^2 =1,
$$
where the convergence holds in the distribution sense. Then, using $\psi_n$ as test function in \eqref{Rayleigh}, we get
$$
\limsup_{d_I \to 0}\lambda_1(d_I) \leq -\int_\O (\alpha \fint S_0 -\mu)\psi_n^2.
$$
Taking the limit $n \to +\infty$, we get
$$
\limsup_{d_I \to 0}\lambda_1(d_I) \leq -\max\{\alpha \fint S_0 -\mu \},
$$
hence the result.
\end{proof}

We can now get Corollary \ref{cor prop}.
\begin{proof}[Proof of Corollary \ref{cor prop}]
Let $\alpha,\mu,S_0$ be positive and continuous on $\ol \O$ be such that    $$\frac{\fint_\O\alpha\fint_\O S_0}{\fint_\O \mu}<1<\max_{x\in\O}\left\{\frac{\alpha(x)\fint_\O S_0}{\mu(x)}\right\}.$$
This implies that
\begin{equation}\label{eq nc}
\min_{x\in\O}\left\{\mu(x)-\alpha(x)\fint S_0\right\}<0<\fint \mu - \fint \alpha \fint S_0.
\end{equation}
Let $\lambda_1(d_I)$ denote the principal eigenvalue of the operator
$$
L : \phi \mapsto -d_I\Delta \phi -(\alpha \fint_\O S_0 -\mu)\phi.
$$
Then, owing to Proposition \ref{prop quali}, we know that $d_I \mapsto \l(d_I)$ is strictly increasing (because $\alpha \fint_\O S_0 -\mu$ can not be constant owing to \eqref{eq nc}) and is such that
$$
\lim_{d_I \to 0} \l(d_I)<0<\lim_{d_I\to +\infty}\l(d_I).
$$
Corollary \ref{cor prop} follows by defining
$$
d^\star := \sup\{ d >0 \ : \ \l(d)\leq 0\},
$$
thanks to Theorem \ref{th threshold}.
\end{proof}

\section{Comparing the diffusive and the averaged model}\label{sec limit}

We now focus on Question \ref{q2}, that is, we compare how the diffusive model \eqref{syst} and the averaged model \eqref{SIRav} differ.

Our first result on this question, Corollary \ref{cor quali}, directly comes from Theorem \ref{th threshold} and Proposition \ref{prop quali}.
\begin{proof}[Proof of Corollary \ref{cor quali}]
Let $\alpha, \mu, S_0$ be positive and continuous on $\ol\O$.

The first point comes directly by observing that, if $\alpha, \mu$ are constant, then the principal eigenvalue of the elliptic operator
$$
-\nabla\cdot A_I \nabla -\left(\alpha\fint_\O S_0 -\mu\right)
$$
is $-\alpha\fint_\O S_0 +\mu$. Combining this with Theorem \ref{th threshold}, we get the first point.

The second and third points come directly from the second and third points of Proposition \ref{prop quali} combined with Theorem \ref{th threshold}, together with Remark \ref{req av}.
\end{proof}

We now prove Theorem \ref{th final state}. 

\begin{proof}[Proof of Theorem \ref{th final state}]
Let $S_0, \mu, \alpha$ be positive constants. Let $I_0$ be non-negative and continuous on $\ol\O$. Let $(S(t,x),I(t,x))$ be the solution of \eqref{systH} arising from the initial datum $(S_0,I_0)$.

Let us start with proving that, for every $0 < T_1 \leq T_2$, we have
\begin{multline}\label{eq T12}
\int_\O f(S(T_2,x))dx + \frac{\alpha}{\mu}\int_\O I(T_2,x)dx =\\ \int_\O f(S(T_1,x))dx + \frac{\alpha}{\mu}\int_\O I(T_1,x)dx + d_S\int_\O\int_{T_1}^{T_2}\frac{\vert \nabla S\vert^2(\tau,x)}{S(\tau,x)}d\tau dx,
\end{multline}
where $f(x) := \frac{\alpha}{\mu}x - \ln(x)$.

We start with observing that
$$
\partial_t \ln(S(t,x)) = d_S \frac{\Delta S }{S} - \alpha I. 
$$
We integrate for $x\in \O$ and for $t=T_1$ to $t=T_2$ to get
$$
\int_\O \ln(S(T_2,x))dx - \int_\O\ln(S(T_1,x))dx = d_S\int_ {T_1}^{T_2}\int_{\O} \frac{\vert \nabla S\vert^2}{S^2}(\tau,x)d\tau dx - \alpha \int_{T_1}^{T_2}\int_{\O}I(\tau,x)d\tau dx.
$$
Combining the equations for $I$ and $S$ we have
$$
\partial_t I - d_I\Delta I +\mu I = -(\partial_t S -d_S \Delta S).
$$
Integrating this for $x\in \O$ and for $t=T_1$ to $t=T_2$ we obtain
$$
\int_\O (S(T_2,x)+I(T_2,x))dx + \mu \int_{T_1}^{T_2}\int_\O I(\tau,x)d\tau dx =  \int_\O S(T_1,x)dx+\int_\O I(T_1,x)dx. 
$$
Combining what precedes, we obtain \eqref{eq T12}.

Now, taking the limits $T_2 \to +\infty$ and $T_1 \to 0$ in \eqref{eq T12}, we obtain
$$
 f(S_\infty) =  f(S_0) + \frac{\alpha}{\mu}\fint_\O I_0 + d_S\fint_\O\int_{0}^{+\infty}\frac{\vert \nabla S\vert^2(\tau,x)}{S(\tau,x)}d\tau dx.
$$
Owing to the non-negativity of $d_S\fint_{\O}\int_0^{+\infty} \frac{\vert \nabla S\vert^2}{S^2}$, we have
$$
f(S_\infty) \leq f\left( S_0\right) + \frac{\alpha}{\mu}\fint_\O I_0,
$$
and this inequality is strict as soon as $\nabla S \not\equiv 0$, which is the case if and only if $I_0$ is not constant.

Now, let $(S^A,I^A)$ be the solution of the SIR system \eqref{SIR} with initial datum $(S_0,\fint_\O I_0)$, and let $S_\infty^A = \lim_{t\to +\infty} S^A(t)$. As recalled in Theorem \ref{th SIR}, we have
$$
f(S_\infty^A) = f(S_0) + \frac{\alpha}{\mu}\fint_\O I_0.
$$
Therefore, because $f$ is strictly decreasing on $(0,\frac{\mu}{\alpha}]$ and because $S_\infty, S_\infty^A \leq \frac{\mu}{\alpha}$, we obtain
$$
S_\infty \leq S_\infty^H,
$$
and this inequality is strict as soon as $I_0$ is not constant.
\end{proof}

We now turn to Proposition \ref{prop equal}. In the course of the proof, we will need the two following technical lemmas:
\begin{lemma}\label{lem 1}
Let $S_0, \alpha, \mu$ be positive constant.
For any $\delta >0$, there is $\eta >0$ such that, for every $I_0$ such that $\vert\vert I_0\vert\vert_{L^\infty} \leq \delta$, the solution $(S,I)$ of \eqref{systH} arising from the initial datum $(S_0,I_0)$, satisfies
$$
S(t,x)\geq \eta, \quad \text{for all } \ t>0, \ x\in \O.
$$
\end{lemma}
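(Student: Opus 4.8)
The plan is to bound $S$ from below by comparing it with a spatially homogeneous subsolution, and then to reduce the whole matter to a uniform bound on the time-integral of the infection. I would first rewrite the $S$-equation in \eqref{systH} as the linear parabolic equation $\partial_t S - d_S\Delta S + \alpha I\,S = 0$, whose zeroth-order coefficient $\alpha I$ is nonnegative and bounded. Setting $M(\tau) := \|I(\tau,\cdot)\|_{L^\infty(\O)}$, I introduce the space-independent function
$$
g(t) := S_0\,\exp\left(-\alpha\int_0^t M(\tau)\,d\tau\right).
$$
A direct computation gives $\partial_t g - d_S\Delta g + \alpha I g = \alpha\,(I-M)\,g \leq 0$, so $g$ is a subsolution, it trivially satisfies the Neumann condition (being constant in $x$), and $g(0)=S_0=S(0,\cdot)$. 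The parabolic comparison principle then yields $S(t,x)\geq g(t)$ for all $t>0$, $x\in\O$, and since $g$ is nonincreasing, $S(t,x)\geq S_0\exp\!\big(-\alpha\int_0^{+\infty}M(\tau)\,d\tau\big)$. Hence it suffices to exhibit a bound on $\int_0^{+\infty}M(\tau)\,d\tau$ depending only on $\delta$ and the fixed data, not on the particular choice of $I_0$.

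To get this bound I would split the time integral at $\tau=1$. On $[0,1]$, the fourth point of Lemma \ref{lem use} (which applies since \eqref{systH} is a special case of \eqref{syst}) gives $M(\tau)\leq K\|I_0\|_{L^\infty}\leq K\delta$ with $K$ independent of $I_0$, hence $\int_0^1 M\leq K\delta$. On $[1,+\infty)$, the parabolic Harnack inequality provides a constant $C>0$, depending on the coefficients only through $\|S\|_{L^\infty}\leq S_0$ (uniform by the third point of Lemma \ref{lem use}) and the fixed $d_I,\alpha,\mu$, such that $M(\tau)=\max_{x}I(\tau,x)\leq C\int_\O I(\tau+1,y)\,dy$, so that $\int_1^{+\infty}M\leq C\int_0^{+\infty}\!\int_\O I$.

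The last ingredient is a uniform bound on $\int_0^{+\infty}\!\int_\O I$. Integrating the two equations of \eqref{systH} over $\O$ and using the Neumann conditions yields the mass-dissipation identity $\frac{d}{dt}\int_\O(S+I) = -\mu\int_\O I$; integrating in time and using $I(t,\cdot)\to 0$, $S(t,\cdot)\to S_\infty$ (Proposition \ref{prop cv unif}) gives $\mu\int_0^{+\infty}\!\int_\O I = \int_\O(S_0+I_0)-|\O|S_\infty\leq |\O|(S_0+\delta)$. Combining the three estimates, $\int_0^{+\infty}M \leq K\delta + \tfrac{C}{\mu}|\O|(S_0+\delta) =: \Theta(\delta)$, and the lemma follows with $\eta := S_0\,e^{-\alpha\,\Theta(\delta)}>0$.

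The main obstacle I expect is the short-time control of $M$ together with the uniformity of all constants in $I_0$: near $t=0$ the infection need not be comparable to its spatial average, so Harnack is unavailable there and one must rely on the crude exponential-in-time bound of Lemma \ref{lem use}, whose constant $K$ is, crucially, independent of $I_0$; one must also verify that the Harnack constant on $[1,+\infty)$ depends on the coefficients only through quantities ($\|S\|_{L^\infty}\le S_0$ and the fixed $d_I,\alpha,\mu$) that are bounded uniformly over all admissible $I_0$. An alternative route that sidesteps Harnack entirely is to compare $\ln S$ instead: since $\partial_t\ln S \ge d_S\Delta\ln S - \alpha I$, Duhamel's formula with the Neumann heat kernel $p_s$ (of total mass one, and uniformly bounded for $s\ge d_S$) bounds $\ln S$ below by $\ln S_0 - \alpha\int_0^t (e^{(t-\tau)d_S\Delta}I(\tau,\cdot))$, and the same splitting at $t-\tau=1$ controls this using only $\int_0^{+\infty}\!\int_\O I$ for the far part and the $L^\infty$ bound of $I$ for the near part.
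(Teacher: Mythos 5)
Your proof is correct, but it takes a genuinely different route from the paper's. The paper argues by contradiction: assuming $S^n(t_n,x_n)\leq 1/n$ along a sequence, it first shows $t_n\to+\infty$ via the crude bound $S^n\geq S_0e^{-\alpha K\delta t}$, then invokes the energy relation \eqref{eq T12} to get the uniform bound $\int_\O f(S^n(t,x))\,dx\leq f(S_0)\vert\O\vert+\frac{\alpha}{\mu}\delta\vert\O\vert$ with $f(x)=\frac{\alpha}{\mu}x-\ln(x)$, and finally applies the Harnack inequality to $S^n$ to upgrade pointwise smallness at $(t_n,x_n)$ to uniform smallness at time $t_n-1$, contradicting the blow-up of $f$ at $0$. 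You instead argue directly: the explicit subsolution $g(t)=S_0\exp\bigl(-\alpha\int_0^t M\bigr)$ reduces everything to a uniform bound on $\int_0^{+\infty}\Vert I(\tau,\cdot)\Vert_{L^\infty}\,d\tau$, which you obtain by splitting at $\tau=1$, using the constant $K$ of Lemma \ref{lem use} near $t=0$ and, for large times, the same Harnack estimate $\max_x I(t,x)\leq C\int_\O I(t+1,x)\,dx$ that the paper establishes in the proof of Lemma \ref{lem use} (with $C$ uniform in $I_0$ since the zeroth-order coefficient $\alpha S-\mu$ is bounded by $\alpha S_0+\mu$), combined with the mass-dissipation identity $\mu\int_0^{\infty}\int_\O I\leq\int_\O(S_0+I_0)$. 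All steps check out; note you do not even need Proposition \ref{prop cv unif} for the last bound, since $\int_\O(S+I)$ is nonincreasing and nonnegative. What each approach buys: the paper's proof recycles the identity \eqref{eq T12}, which it needs anyway for Theorem \ref{th final state} and Proposition \ref{prop equal}, but it is nonconstructive (no explicit $\eta$) and is tied to the homogeneous structure through $f$; your proof applies Harnack to $I$ rather than to $S$, yields the explicit constant $\eta=S_0\,e^{-\alpha\Theta(\delta)}$, and, since it never uses the logarithmic conserved quantity, would adapt with minor changes (replacing $\alpha$ by $\max\alpha$ and $\mu$ by $\min\mu$) to the heterogeneous system \eqref{syst}, where \eqref{eq T12} is unavailable.
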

\begin{proof}
We argue by contradiction. Assume that there is $\delta >0$ and a sequence $(I_0^n)_{n\in \N}$ with $\vert\vert I_0^n\vert\vert_{L^\infty}\leq \delta$, such that, denoting $(S^n,I^n)$ the solution of \eqref{systH} arising from the initial datum $(S_0,I_0^n)$, there are $t_n>0$, $x_n \in \O$ such that
$$
S^n(t_n,x_n)\leq \frac{1}{n}.
$$
Let us start with observing that, owing to Lemma \ref{lem use}, we have, for some $K>0$, $I^n(t,x) \leq K \delta$, hence
$$
\partial_t S^n - d_S \Delta S^n +\alpha K \delta S^n \geq 0.
$$
The parabolic comparison principle implies that
$$
S^n(t,x)\geq S_0 e^{-\alpha K \delta t}, \quad \text{for } \ t>0, \ x\in \O.
$$
This implies that, necessarily, $t_n \to +\infty$ as $n$ goes to $+\infty$.


Now, it follows from the relation \eqref{eq T12} that, for every $n\in \N$ and $t>0$,
$$
\int_\O f(S^n(t,x))dx + \frac{\alpha}{\mu}\int_\O I^n(t,x)dx \leq f(S_0) \vert \O\vert + \frac{\alpha}{\mu}\int_\O I^n_0,
$$
hence
\begin{equation}\label{eq min s}
\int_\O f(S^n(t,x))dx  \leq f(S_0) \vert \O\vert + \frac{\alpha}{\mu}\delta \vert \O\vert.
\end{equation}
Now, because $S^n$ solves a parabolic equation, the Harnack inequality implies that there is $C>0$ such that, for $n$ large enough so that $t_n>1$,
$$
\max_{x\in \O}S^n(t_n-1,x) \leq C \min_{x\in \O}S^n(t_n,x) \leq \frac{C}{n}.
$$
Therefore, $S^n(t_n-1,x)$ goes to zero uniformly in $x$ as $n$ goes to $+\infty$. However, applying \eqref{eq min s} at $t = t_n -1$ and taking $n$ large enough leads to a contradiction, because $f(x) \to +\infty$ when $x$ goes to $0$. This concludes the proof.
\end{proof}

\begin{lemma}\label{lem 2}
Let $\alpha, \mu, S_0$ be positive constants. Let $I_0\in C^0(\ol \O)$ be non-negative. Let $(S,I)$ be the solution of \eqref{systH} arising from the initial datum $(S_0,I_0)$. Then, there is $a \in \R$ such that, for every $T > 0$,
$$
\int_{\O}\vert \nabla S(t,x) \vert^2dx  \leq   \left(\int_{\O}\vert \nabla I(T,x) \vert^2 + \vert \nabla S(T,x) \vert^2 dx\right) e^{-a( t-T)},\quad \text{ for }\ t\geq T.
$$
Moreover, there is $C>0$ such that, if $d_S,d_I > C$ and if $\vert\vert I_0\vert\vert_{L^\infty} \leq \frac{1}{C}$, we have $a>0$.
\end{lemma}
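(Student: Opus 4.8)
The plan is to run a gradient energy estimate on the combined quantity
$$E(t) := \frac12\int_\O \left(|\nabla S(t,x)|^2 + |\nabla I(t,x)|^2\right)dx,$$
and to show that $\dot E(t) \leq -a E(t)$ for a suitable real constant $a$, with $a>0$ in the large-diffusion, small-$I_0$ regime. Since $\int_\O|\nabla S(t)|^2 \leq 2E(t)$ while $2E(T) = \int_\O(|\nabla S(T)|^2 + |\nabla I(T)|^2)$, integrating the differential inequality from $T$ to $t$ yields exactly the stated bound, so the whole statement reduces to establishing such a Gronwall-type inequality for $E$.

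First I would compute $\dot E$. Using the Neumann conditions $\partial_\nu S = \partial_\nu I = 0$, and the identity $\frac{d}{dt}\frac12\int_\O|\nabla u|^2 = -\int_\O \Delta u\,\partial_t u$ (all boundary terms vanish), multiplying each equation of \eqref{systH} by the corresponding Laplacian and integrating by parts gives
$$\frac{d}{dt}\frac12\int_\O|\nabla S|^2 = -d_S\int_\O|\Delta S|^2 - \alpha\int_\O I|\nabla S|^2 - \alpha\int_\O S\,\nabla S\cdot\nabla I,$$
together with a companion identity for $I$ containing the terms $-d_I\int_\O|\Delta I|^2$, $+\alpha\int_\O S|\nabla I|^2$, $-\mu\int_\O|\nabla I|^2$ and $+\alpha\int_\O I\,\nabla S\cdot\nabla I$. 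Summing, the two cross terms combine into $\alpha\int_\O(I-S)\,\nabla S\cdot\nabla I$, and the non-positive term $-\alpha\int_\O I|\nabla S|^2$ can be discarded.

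The key analytic ingredient is the spectral-gap inequality $\int_\O|\Delta u|^2 \geq \gamma_1\int_\O|\nabla u|^2$, valid for any $u$ satisfying the Neumann condition, where $\gamma_1>0$ is the first nonzero Neumann eigenvalue of $-\Delta$ on $\O$; this follows at once from the spectral decomposition in the Neumann eigenbasis. Applying it to both $S$ and $I$ converts $-d_S\int_\O|\Delta S|^2$ and $-d_I\int_\O|\Delta I|^2$ into $-d_S\gamma_1\int_\O|\nabla S|^2$ and $-d_I\gamma_1\int_\O|\nabla I|^2$. The surviving terms are controlled with the uniform bounds $0\leq S\leq S_0$ and $0\leq I\leq K\|I_0\|_{L^\infty}$ from Lemma \ref{lem use}: the reaction term is bounded by $\alpha S_0\int_\O|\nabla I|^2$, and the cross term by $\frac{\alpha}{2}M\int_\O(|\nabla S|^2+|\nabla I|^2)$ via Young's inequality, where $M := S_0 + K\|I_0\|_{L^\infty}$. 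Collecting coefficients gives
$$\dot E \leq -\Big(d_S\gamma_1 - \tfrac{\alpha}{2}M\Big)\int_\O|\nabla S|^2 - \Big(d_I\gamma_1 + \mu - \alpha S_0 - \tfrac{\alpha}{2}M\Big)\int_\O|\nabla I|^2,$$
so that $\dot E\leq -aE$ with $a := 2\min\{d_S\gamma_1 - \frac{\alpha}{2}M,\ d_I\gamma_1 + \mu - \alpha S_0 - \frac{\alpha}{2}M\}$, a finite real number. This proves the first assertion.

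For the second assertion, both coefficients are made positive by taking the diffusivities large and the datum small: if $\|I_0\|_{L^\infty}\leq 1/C$ then $M\leq S_0 + K/C$ is close to $S_0$, and choosing $C$ (hence $d_S,d_I>C$) large enough forces $d_S\gamma_1 > \frac{\alpha}{2}M$ and $d_I\gamma_1 > \alpha S_0 + \frac{\alpha}{2}M - \mu$, whence $a>0$. The main obstacle is the positive reaction contribution $+\alpha\int_\O S|\nabla I|^2$, which carries no definite sign and cannot be absorbed for arbitrary diffusivities; it is precisely here that the largeness of $d_I$ enters, through the spectral-gap inequality, to dominate it. This is also the technical origin of the large-diffusion hypothesis that the author flags as presumably removable.
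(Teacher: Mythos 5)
Your proof is correct and follows essentially the same route as the paper's: the same combined gradient energy $\frac12\int_\O(|\nabla S|^2+|\nabla I|^2)$, the same spectral-gap inequality $\int_\O|\Delta u|^2\geq\gamma_1\int_\O|\nabla u|^2$ for Neumann functions, the same $L^\infty$ bounds $S\leq S_0$ and $I\leq K\|I_0\|_{L^\infty}$ from Lemma \ref{lem use} to control the reaction and cross terms, and the same Gronwall conclusion with largeness of $d_S,d_I$ and smallness of $\|I_0\|_{L^\infty}$ ensuring $a>0$. The only cosmetic difference is that you sum the two identities and merge the cross terms into $\alpha\int_\O(I-S)\,\nabla S\cdot\nabla I$ before applying Young's inequality, whereas the paper estimates each cross term separately, which merely changes the explicit constants.
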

\begin{proof}
	We define
	$$
	m_S(t) := \frac{1}{2}\int_{\O}\vert \nabla S(t,x) \vert^2dx, \quad m_I(t) := \frac{1}{2}\int_{\O}\vert \nabla I(t,x) \vert^2dx.
	$$
	We have
	\begin{equation*}
	\begin{array}{rl}
		\dot m_S (t) &= -\int_\O\Delta S \partial_t S \\
	&= -d_S\int_\O(\Delta S)^2+\alpha\int_\O  \Delta S S I\\
	&= -d_S \int_\O(\Delta S)^2 - \alpha\int_\O \vert\nabla S \vert^2I - \int_\O S \nabla S \nabla I.
	\end{array}
	\end{equation*}
	Letting $\rho_1 >0$ be the first non-zero eigenfunction of the Laplace operator on $\O$ with Neuman boundary conditions, we have $\rho_1\int_\O \vert \nabla S(t,x)\vert^2dx \leq \int_\O (\Delta S(t,x))^2dx$. Using Lemma \ref{lem use}, we get
$$
\dot m_S(t) + d_S \rho_1 m_S(t)  \leq \alpha \vert\vert S_0\vert\vert_{L^\infty}(m_S(t) + m_I(t)), \quad \text{ for } t>0.
$$
Now, a similar computation for $m_I$ shows that
$$
\dot m_I + (d_I \rho_1 +\mu )m_I \leq \alpha K\vert\vert I_0\vert\vert_{L^\infty} (m_S + m_I) + \alpha \vert \vert S_0\vert\vert_{L^\infty}m_I,
$$
where $K>0$ is from Lemma \ref{lem use}.

Therefore, letting $u(t) := m_S(t)+m_I(t)$, we get
$$
\dot u(t)+ a u(t) \leq 0, \quad \text{ for } t > 0,
$$
where $a:= \min\{ d_S\rho_1 - \alpha \vert\vert S_0\vert\vert_{L^\infty} - \alpha K \vert\vert I_0\vert\vert_{L^\infty}  ,d_I\rho_1 + \mu  - 2\alpha \vert\vert S_0\vert\vert_{L^\infty} - \alpha K \vert\vert I_0\vert\vert_{L^\infty} \}$. Therefore, we conclude that, for $T>0$,
$$
u(t) \leq u(T)e^{-a (t-T)}, \quad \text{ for }\ t\geq T.
$$
We can obseve that, if $d_S > \frac{ \alpha \vert\vert S_0\vert\vert_{L^\infty}}{\rho_ 1}$ and $d_I> \frac{ 2\alpha \vert\vert S_0\vert\vert_{L^\infty}}{\rho_1}$, up to taking  $\vert\vert I_0\vert\vert_{L^\infty} \leq \frac{\mu}{\alpha K}$, we have that $a>0$.
\end{proof}

We are now in position to prove Proposition \ref{prop equal}.
\begin{proof}[Proof of Proposition \ref{prop equal}]
Let $S_0,\alpha,\mu$ be positive constants. Let $(I_0^n)_{n\in \N}$ be a sequence of non-negative continuous functions such that $\vert\vert I_0^n\vert\vert_{L^{\infty}} \to 0$ as $n$ goes to $+\infty$. Let $(S^n,I^n)$ be the solution of \eqref{systH} arising from the initial datum $(S_0,I_0^n)$.

According to Lemma \ref{lem use}, we have $S^n(t,x)\leq  S_0$ and $I^n(t,x)\leq K \vert \vert I^n_0\vert\vert_{L^\infty}$, for some $K>0$ independent of $n$. Therefore, the parabolic comparison principle gives us
$$
I^n(t,x)\leq \vert \vert I^n_0\vert\vert_{L^\infty}e^{(\alpha  S_0 + \mu)t} , \quad \text{ for } t>0,
$$
and
$$
S^n(t,x) \geq S_0e^{-\alpha K \vert \vert I^n_0\vert\vert_{L^\infty} t},\quad \text{ for } t>0.
$$
Therefore, $I^n$ goes to zero and $S^n$ goes to $S_0$ as $n$ goes to $+\infty$, uniformly in $x$ and locally uniformly in $t>0$.

Now, because $I^n$ and $S^n$ are solutions of parabolic equations with bounded coefficients, the parabolic regularity estimates (see \cite{Lie}) gives us that $\nabla S^n$ and $\nabla I^n$ converge to zero, uniformly in $x\in \O$ and locally uniformly in $t>0$.

Let
$$
S_\infty^n := \lim_{t\to +\infty}S^n(t,x).
$$
Using the relation \eqref{eq T12}, with $T_2 \to +\infty$ and with $T_1 = 1$, we find that
\begin{equation}\label{eq lim n}
f(S_\infty^n) = d_S\int_{1}^{+\infty}\fint_\O \frac{\vert \nabla S^n\vert^2}{{S^n}^2} + \fint_\O f(S^n(1,x))dx + \frac{\alpha}{\mu}\fint_\O I^n(1,x)dx.
\end{equation}
Owing to Lemma \ref{lem 1} and Lemma \ref{lem 2}, we get that, if $d_S,d_I>C$, for some $C>0$ given by Lemma \ref{lem 2},
\begin{multline*}
\int_{1}^{+\infty}\fint_\O \frac{\vert \nabla S^n\vert^2}{{S^n}^2} \leq \frac{1}{\min_{x\in \O}S^n}\int_{1}^{+\infty}\fint_\O \vert \nabla S^n\vert^2\\
\leq  \frac{1}{\min_{x\in \O}S^n}\frac{1}{a}\fint_\O (\vert \nabla S^n(1,x)\vert^2+\vert \nabla I^n(1,x)\vert^2)dx,
\end{multline*}
where $a>0$ is from Lemma \ref{lem 2}. Because $\nabla S^n(1,x)$ and $\nabla I^n(1,x)$ go to zero uniformly in $x\in\O$ as $n$ goes to $+\infty$, we find that
$$
\int_{1}^{+\infty}\fint_\O \frac{\vert \nabla S^n\vert^2}{{S^n}^2} \underset{n\to+\infty}{\longrightarrow}0.
$$
Moreover, we have
$$
\fint_\O f(S^n(1,x))dx + \frac{\alpha}{\mu}\fint I^n(1,x)dx \underset{n\to +\infty}{\longrightarrow} \fint_\O f(S_0)dx.
$$
Hence, taking the limit $n\to+\infty$ in \eqref{eq lim n} gives
$$
f(S_\infty^n) \underset{n\to+\infty}{\longrightarrow} f(S_0).
$$
Now, let $(S_n^A(t),I_n^A(t))$ be the solution of the SIR model \eqref{SIR} arising from the initial datum $(S_0,\fint_\O I^n_0)$, and let $S_\infty^{A,n} := \lim_{t\to+\infty}S_n^A(t)$. Owing to Theorem \ref{th SIR}, we know that
$$
f(S_\infty^{A,n}) = f(S_0) + \frac{\alpha}{\mu}\fint I^n_0,
$$
from which we eventually concludes that
$$
f(\lim_{n\to +\infty} S_\infty^{A,n}) = f(\lim_{n\to +\infty}S_\infty^n).
$$
We have that $\lim_{n\to +\infty} S_\infty^{A,n} \leq \frac{\mu}{\alpha}$ and $\lim_{n\to +\infty}S_\infty^n\leq \frac{\mu}{\alpha}$. Therefore, because $f$ is injective on $(0,\frac{\mu}{\alpha})$, the result follows.
\end{proof}


\end{document}